\documentclass{amsart}

\usepackage{amsmath}
\usepackage{amsthm}
\usepackage{hyperref}
\usepackage{amsfonts,graphics,amsthm,amsfonts,amscd,latexsym}
\usepackage{epsfig}
\usepackage{flafter}
\usepackage{mathtools}
\usepackage{comment}
\usepackage{stmaryrd}

\usepackage{mathabx,epsfig}

\hypersetup{
    colorlinks=true,    
    linkcolor=blue,          
    citecolor=blue,      
    filecolor=blue,      
    urlcolor=blue           
}
\usepackage{tikz}
\usetikzlibrary{graphs,positioning,arrows,shapes.misc,decorations.pathmorphing}

\tikzset{
    >=stealth,
    every picture/.style={thick},
    graphs/every graph/.style={empty nodes},
}

\tikzstyle{vertex}=[
    draw,
    circle,
    fill=black,
    inner sep=1pt,
    minimum width=5pt,
]
\usepackage[position=top]{subfig}
\usepackage{amssymb}
\usepackage{color}

\setlength{\textwidth}{\paperwidth}
\addtolength{\textwidth}{-2in}
\calclayout

\usetikzlibrary{decorations.pathmorphing}
\tikzstyle{printersafe}=[decoration={snake,amplitude=0pt}]

\newcommand{\supp}{\operatorname{supp}}

\newcommand{\pp}{\mathbb{P}}

\newcommand{\qq}{\mathbb{Q}}
\newcommand{\zz}{\mathbb{Z}}

\newcommand{\kk}{\mathbb{K}}

\def\O#1.{\mathcal {O}_{#1}}			
\def\pr #1.{\mathbb P^{#1}}				
\def\af #1.{\mathbb A^{#1}}			
\def\ses#1.#2.#3.{0\to #1\to #2\to #3 \to 0}	
\def\xrar#1.{\xrightarrow{#1}}			
\def\K#1.{K_{#1}}						
\def\bA#1.{\mathbf{A}_{#1}}			
\def\bM#1.{\mathbf{M}_{#1}}				
\def\bL#1.{\mathbf{L}_{#1}}				
\def\bB#1.{\mathbf{B}_{#1}}				
\def\bK#1.{\mathbf{K}_{#1}}			
\def\subs#1.{_{#1}}					
\def\sups#1.{^{#1}}

\usepackage{tikz}
\usetikzlibrary{matrix,arrows,decorations.pathmorphing}

  \newtheorem{theorem}{Theorem}[section]
  \newtheorem{lemma}[theorem]{Lemma}

  \newtheorem{definition}[theorem]{Definition}
  \newtheorem{example}[theorem]{Example}

  \newtheorem{question}[theorem]{Question}

\newtheorem{remark}[theorem]{Remark}

\theoremstyle{remark}

\numberwithin{equation}{section}

\usepackage[all]{xy}

\begin{document}

\title[Birational complexity of log Calabi--Yau $3$-folds]{Birational complexity of log Calabi--Yau $3$-folds}

\author[J.~Moraga]{Joaqu\'in Moraga}
\address{UCLA Mathematics Department, Box 951555, Los Angeles, CA 90095-1555, USA
}
\email{jmoraga@math.ucla.edu}

\subjclass[2020]{Primary 14E05, 14J32;
Secondary 14E30.}

\begin{abstract}
We study the birational complexity
of log Calabi--Yau $3$-folds.
For such a pair $(X,B)$ 
of index one 
and coregularity zero,
we show that 
$c_{\rm bir}(X,B)\in \{0,2,3\}$.
Further, we prove that $(X,B)$
has a log Calabi--Yau 
crepant birational model 
that admits a crepant 
contraction to $(\pp^{3-c},H_0+\dots+H_{3-c})$,
where $c=c_{\rm bir}(X,B)$.
To prove this, we give a geometric characterization of standard $\pp^1$-links.
\end{abstract}

\maketitle

\setcounter{tocdepth}{1} 
\tableofcontents

\section{Introduction}
For a general type variety $X$
its canonical model is 
the best possible birational model
from almost any perspective.
On the contrast, log Calabi--Yau pairs $(X,B)$ 
may have countably many birational non-isomorphic models 
that satisfy different geometric properties.
Thus, it is natural to introduce and study invariants 
that allows us to detect better birational models
of a given log Calabi--Yau pair.
Toric log Calabi--Yau pairs, due to their 
combinatorial nature, are the log Calabi--Yau pairs that we can understand the most.
The complexity $c(X,B)$, of a log Calabi--Yau pair $(X,B)$, is defined to be the value $\dim X +\dim_\qq {\rm Cl}_\qq(X) -|B|$, 
where $|B|$ is the sum of the coefficients of $B$.
This invariant is non-negative and $(X,\lfloor B\rfloor)$ is toric whenever it is zero (see~\cite{BMSZ18}).
In~\cite{MM24}, Mauri and the author introduced the birational complexity $c_{\rm bir}(X,B)$ of a log Calabi--Yau pair and proved several fundamental properties of this invariant. 
The birational complexity is the minimum among the complexities of birational models of a given log Calabi--Yau pair. This invariant 
detects whether a log Calabi--Yau pair has a {\em toric model} i.e., it is birationally equivalent to a toric log Calabi--Yau pair.
Thus, the birational complexity gives a natural invariant 
to find optimal models for log Calabi--Yau pairs.

In this article, we study the possible values of the birational complexity of log Calabi--Yau $3$-folds $(X,B)$ of index one
and coregularity zero.~\footnote{Log Calabi--Yau pairs of coregularity zero are called pairs of {\em maximal intersection} in the literature.} For each value of the birational complexity, we find a birational model that computes the birational complexity and enjoys some extra geometric features.

Throughout the article, we denote by $\Sigma^n$ the reduced sum of the coordinate hyperplanes of $\mathbb{P}^n$. Gross, Hacking, and Keel proved that every log Calabi--Yau surface of index one and coregularity zero 
admits a toric model, or equivalently, has birational complexity zero (see, e.g.,~\cite[Proposition 1.3]{GHK15}).

\begin{theorem}\label{introthm:logCY-surfaces}
Let $(X,B)$ be a log Calabi--Yau surface of
index one and coregularity zero.
Then, we have $c_{\rm bir}(X,B)=0$.
In particular, there is a crepant birational isomorphism
$(X,B)\simeq_{\rm cbir} (\pp^2,\Sigma^2)$.
\end{theorem}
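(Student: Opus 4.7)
The plan is to reduce the statement to an existing theorem on toric models of log Calabi--Yau surfaces and then to connect any toric model to the standard example $(\pp^2,\Sigma^2)$ via toric birational geometry.

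First, I would invoke the theorem of Gross--Hacking--Keel \cite[Proposition~1.3]{GHK15}: any log Calabi--Yau surface $(X,B)$ of index one and coregularity zero admits a \emph{toric model}, i.e.\ a crepant birational equivalent pair $(Y,B_Y)$ where $Y$ is a projective toric surface and $B_Y$ its toric boundary. Any complete toric surface paired with its toric boundary has complexity zero; indeed $|B_Y|$ equals the number of rays of the defining fan, which is $\rank \Cl_\qq(Y)+\dim Y$. Hence $c_{\rm bir}(X,B)\le c(Y,B_Y)=0$, and since the birational complexity is always non-negative, we conclude $c_{\rm bir}(X,B)=0$.

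Second, I would upgrade this to a crepant birational isomorphism with $(\pp^2,\Sigma^2)$. By transitivity of crepant birational equivalence, it suffices to establish $(Y,B_Y)\simeq_{\rm cbir}(\pp^2,\Sigma^2)$. Both pairs are toric with their toric boundaries, so the identity between the open tori extends to a toric birational map $Y\dashrightarrow \pp^2$ matching $B_Y$ and $\Sigma^2$. Via the common refinement of the two fans, this map factors as a sequence of toric blowups and blowdowns at torus-fixed points, each of which is crepant for the toric log Calabi--Yau structure. The composition therefore provides the desired crepant birational map.

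The only substantive point to verify, and what I would treat as the main obstacle, is matching the Gross--Hacking--Keel formulation (phrased in terms of log Calabi--Yau surfaces admitting a zero-dimensional log canonical center and satisfying mild positivity hypotheses) with the index-one coregularity-zero hypotheses used here. Coregularity zero is precisely the condition that a dlt modification carries a zero-dimensional stratum, matching their notion of maximal intersection (as noted in the footnote of the excerpt); once this dictionary is made explicit, the rest of the argument is essentially formal.
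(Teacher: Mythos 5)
Your argument is correct and matches the paper's treatment: the paper does not give an independent proof of this statement but simply attributes it to Gross--Hacking--Keel \cite[Proposition 1.3]{GHK15}, exactly as you do, with the passage from a toric model to $(\pp^2,\Sigma^2)$ being the standard fact that any two toric log Calabi--Yau pairs are crepant birational via a common refinement of fans. Your accounting $|B_Y| = \dim Y + \dim_\qq\Cl_\qq(Y)$ for a complete toric pair, and the dictionary between coregularity zero and maximal intersection, are both as the paper intends.
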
 

Our first theorem is a classification
of the possible values of the birational complexity
of log Calabi--Yau $3$-folds of index one
and coregularity zero.
More precisely, we prove the following.

\begin{theorem}\label{introthm:values-cbir-3-folds}
Let $(X,B)$ be a log Calabi--Yau $3$-fold 
of index one and coregularity zero.
Then, we have $c_{\rm bir}(X,B)\in \{0,2,3\}$.
\end{theorem}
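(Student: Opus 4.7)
The plan is to analyze the birational complexity on a suitable crepant $\mathbb{Q}$-factorial dlt model and to exhibit, for each allowed value, a crepant contraction onto a standard toric pair. After passing to a dlt modification $(X',B')$ of $(X,B)$, coregularity zero ensures that the dual complex $\mathcal{D}(X',B')$ is two-dimensional, so there is a $0$-dimensional log canonical center and the boundary components of $B'$ through it are themselves log Calabi--Yau surfaces of coregularity zero. By Theorem~\ref{introthm:logCY-surfaces} each such surface stratum is crepant-birational to $(\mathbb{P}^2,\Sigma^2)$, which supplies a rich toric structure on the stratification of the boundary and is the main input propagating the two-fold classification into dimension three.

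For the constructive direction, realizing the values $c_{\rm bir}\in\{0,2,3\}$, I would use the geometric characterization of standard $\mathbb{P}^1$-links announced in the abstract to detect, on an appropriate crepant model, fibrations whose generic fibre is the toric pair $(\mathbb{P}^1,p_0+p_\infty)$. When three independent such $\mathbb{P}^1$-links can be extracted, they assemble into a crepant birational equivalence with $(\mathbb{P}^3,\Sigma^3)$, giving $c_{\rm bir}=0$. When exactly one can be extracted, one obtains a crepant contraction onto $(\mathbb{P}^1,H_0+H_1)$ whose general fibre is a toric log Calabi--Yau surface, yielding $c_{\rm bir}=2$; the surface theorem ensures these fibres are crepant-birational to $(\mathbb{P}^2,\Sigma^2)$. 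When no crepant $\mathbb{P}^1$-link fibration is available at all, $c_{\rm bir}=3$ and the pair is detected on its own terms.

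The main obstacle is excluding the intermediate value $c_{\rm bir}(X,B)=1$. The strategy is by contradiction: assume there is a crepant birational model admitting a crepant contraction $(X',B')\to(\mathbb{P}^2,\Sigma^2)$ with generic fibre $(\mathbb{P}^1,p_0+p_\infty)$, and promote this structure to a second independent $\mathbb{P}^1$-link fibration. Concretely, the two disjoint sections coming from the generic fibre should extend, by applying the surface theorem to the boundary divisors lying over the coordinate lines $H_i\subset\mathbb{P}^2$, to disjoint horizontal components of $B'$ on the total space; the geometric $\mathbb{P}^1$-link characterization then upgrades these data to a genuine second standard $\mathbb{P}^1$-link, and repeating the argument once more forces $c_{\rm bir}(X,B)=0$, contradicting the assumption $c_{\rm bir}(X,B)=1$. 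The delicate point, and the hardest step of the plan, is to check that the monodromy of the $\mathbb{P}^1$-bundle along the boundary strata of $(\mathbb{P}^2,\Sigma^2)$ does not obstruct the global simultaneous choice of such sections. This is exactly where the two-fold toric classification and the geometric characterization of standard $\mathbb{P}^1$-links must be combined in a nontrivial way, and it is the technical heart of the argument.
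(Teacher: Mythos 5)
Your outline correctly identifies that the substance of the theorem is the exclusion of $c_{\rm bir}(X,B)=1$ (the containment in $\{0,1,2,3\}$ and integrality follow from index one together with~\cite[Theorem 1.4]{MM24}, which should simply be cited rather than re-derived from the dual complex), and your treatment of the case where the two boundary points of the generic fiber extend to two disjoint sections agrees with the paper: a strict conic fibration over $(\pp^2,\Sigma^2)$ forces the complexity down to $0$, a contradiction. However, the step you flag as ``the delicate point'' --- checking that monodromy does not obstruct a global choice of two sections --- is not a gap that can be closed in the direction you propose. The monodromy can genuinely be nontrivial: the horizontal part of the boundary can be a single prime divisor $S$ mapping $2$-to-$1$ onto $\pp^2$, and no combination of the surface classification with the $\pp^1$-link characterization will split it. This irreducible case is exactly where the paper's proof does its real work, and the contradiction there comes from a different source: one performs adjunction to $S$ (showing $(S,B_S)$ is a toric surface, finite crepant of degree $2$ over $(\pp^2,\Sigma^2)$), deduces that $S$ meets each vertical boundary divisor $S_i$ in a curve that is $2$-to-$1$ over the coordinate line $H_i$, and then, passing to the normalization of a vertical component $S_0$ and invoking the two-dimensional classification of log canonical singularities, produces a $D$-type point at which $K+B$ fails to have index one --- contradicting the hypothesis. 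Your proposal never uses the index-one hypothesis in the exclusion step, yet that hypothesis is essential to the statement; this is a structural sign that the argument as planned cannot succeed.

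A second, smaller gap: you assume at the outset of the exclusion step that a model computing $c_{\rm bir}=1$ admits a crepant contraction to $(\pp^2,\Sigma^2)$. This requires proof. The paper obtains it from~\cite[Theorem 1.1]{Mor24a} (a tower of Mori fiber spaces $X'\to Z_1\to Z_2\to{\rm Spec}(\kk)$ with at least two strict conic fibrations among the three maps), followed by a Picard-rank and coefficient count showing $(Z_1,B_{Z_1})$ is toric, and a lifting of the crepant birational map $(\pp^2,\Sigma^2)\dashrightarrow (Z_1,B_{Z_1})$ to the total space. Without this reduction the case analysis never gets started.
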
 

In Example~\ref{ex:cbir=0}, Example~\ref{ex:cbir=2},
and Example~\ref{ex:cbir=3}, we show that all the values in Theorem~\ref{introthm:values-cbir-3-folds} indeed occur. Further, all such values already appear
for pairs $(X,B)$ where $X$ is a Fano hypersurface in $\pp^4$ and $B$ is a suitable anti-canonical divisor.
The next theorem states that
a log Calabi--Yau $3$-fold 
of index one, coregularity zero, and birational complexity $c$
has a birational model that admits a contraction to $\pp^{3-c}$.

\begin{theorem}\label{introthm:crepant-model-3-fold}
Let $(X,B)$ be a log Calabi--Yau $3$-fold 
of index one and coregularity zero.
Let $c:=c_{\rm bir}(X,B)$ be the birational complexity of the pair.
Then, there is a log Calabi--Yau crepant birational model $(X',B')$ of $(X,B)$ that admits a crepant contraction to $(\pp^{3-c},\Sigma^{3-c})$.
\end{theorem}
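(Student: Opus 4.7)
My plan is to split on the value $c=c_{\rm bir}(X,B)\in\{0,2,3\}$ provided by Theorem~\ref{introthm:values-cbir-3-folds}. The cases $c\in\{0,3\}$ are essentially formal, while $c=2$ is the heart of the argument and relies on the characterization of standard $\mathbb{P}^1$-links advertised in the abstract. For $c=3$ the target $\mathbb{P}^{3-c}$ is a point, and the structural morphism from $(X,B)$ to a point trivially satisfies the statement. For $c=0$, pick a crepant birational model $(X',B')$ realizing $c(X',B')=0$; since $(X,B)$ has index one, $B'$ is reduced, and~\cite{BMSZ18} then identifies $(X',B')$ with a toric log Calabi--Yau pair whose boundary is the full toric boundary. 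Since any two toric log Calabi--Yau pairs of the same dimension with full reduced boundary are crepant birational (pass through a common toric refinement of their fans; each toric blowup is crepant for the full boundary), we obtain $(X,B)\simeq_{\rm cbir}(\mathbb{P}^3,\Sigma^3)$, and taking $(X',B')=(\mathbb{P}^3,\Sigma^3)$ with the identity gives the desired crepant contraction.

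The substantive case is $c=2$, where I must construct a crepant birational model $(X',B')$ admitting a crepant contraction $\pi\colon (X',B')\to(\mathbb{P}^1,\Sigma^1)$. I would start from a crepant birational model $(X_0,B_0)$ achieving $c(X_0,B_0)=2$ and use its dual complex -- a connected $2$-dimensional pseudomanifold since the coregularity is zero -- to single out a prime divisor $S\subset X_0$ whose general log stratum is a curve. I would then run an appropriate MMP for $(X_0,B_0-\epsilon S)$; since $K_{X_0}+B_0\equiv 0$, each step remains crepant for $(X_0,B_0)$, and the outcome is a relative Mori-type contraction $\pi\colon X'\to C$ in which $S$ is horizontal over a smooth curve $C$. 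Complexity $2$ together with coregularity $0$ rules out the base being a point or a surface, so $\dim C=1$, and forces at least two other components of $B'$ to be vertical for $\pi$.

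The general fiber of $\pi$ is a log Calabi--Yau surface of coregularity $0$ and index $1$, hence crepant birational to $(\mathbb{P}^2,\Sigma^2)$ by Theorem~\ref{introthm:logCY-surfaces}. The canonical bundle formula for $\pi$ then shows that $(C,\pi_*B')$ is a log Calabi--Yau curve pair with at least two boundary points, hence $(C,\pi_*B')\simeq(\mathbb{P}^1,\Sigma^1)$. Finally, the geometric characterization of standard $\mathbb{P}^1$-links advertised in the abstract promotes the fibration $\pi$ into an honest crepant contraction onto the pair $(\mathbb{P}^1,\Sigma^1)$. The main obstacle I foresee is choosing the divisor $S$ and controlling the MMP so that the output is exactly a fibration over a curve with at least two vertical boundary components; this is where the hypothesis $c=2$ and the $\mathbb{P}^1$-link characterization do their work together in an essential way.
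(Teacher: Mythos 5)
Your treatment of the cases $c=0$ and $c=3$ matches the paper's (the paper simply cites \cite[Theorem 1.6]{MM24} for $c=0$ rather than re-deriving the toric statement, but the content is the same). The case $c=2$, however, has genuine gaps. First, you assert that running a $(K_{X_0}+B_0-\epsilon S)$-MMP yields a Mori fibration over a curve because ``complexity $2$ together with coregularity $0$ rules out the base being a point or a surface.'' This is false as stated: complexity-two, coregularity-zero models do admit Mori fiber spaces onto surfaces (this is exactly the configuration $k=3$ with $\pi_1\colon X'\to Z_1$ that the paper's proof must confront), and nothing in your setup forces your MMP to terminate over a curve. The paper instead invokes the Mori fiber space tower of \cite[Theorem 1.1]{Mor24a}, in which at least one map is a strict conic fibration, and does a case analysis on where that strict conic fibration sits; the case where it is the top map $X'\to Z_1$ is excluded by showing it would force $c_{\rm bir}(X,B)=0$ via Theorem~\ref{introthm:p1-link-char}, leaving the case where $\pi_2\colon Z_1\to\pp^1$ is the strict conic fibration.

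Second, and more seriously, even granting a fibration over a curve $C$, a log Calabi--Yau curve pair ``with at least two boundary points'' need not be $(\pp^1,\Sigma^1)$: the discriminant of the canonical bundle formula can carry fractional coefficients (e.g.\ four points of coefficient $\tfrac{1}{2}$), and coregularity zero only guarantees one coefficient-one point a priori. Arranging the base pair to be exactly $(\pp^1,\Sigma^1)$ is where essentially all of the paper's work goes: the intermediate surface pair $(Z_1,B_{Z_1})$ has index two, the boundary $B'$ has a unique horizontal component mapping $2$-to-$1$ to $Z_1$, and the paper deploys Lemma~\ref{lem:2-to-1}, the two-ray game Lemmas~\ref{lem:going-down} and~\ref{lem:going-down-2}, and an explicit reduction through a Hirzebruch surface to $(\pp^1\times\pp^1,\Gamma)$, finally projecting to the second factor. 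Your concluding appeal to the standard $\pp^1$-link characterization to ``promote'' the surface fibration $\pi\colon X'\to C$ to a crepant contraction is also a misapplication: Theorem~\ref{introthm:p1-link-char} concerns relative-dimension-one fibrations with two disjoint sections, not a $3$-fold fibered in surfaces over a curve.
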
 

We refer the reader to Definition~\ref{def:crepant-model}
for the concept of crepant birational model
and to Definition~\ref{def:crepant-contraction}
for the concept of crepant contraction.
The previous theorem can be regarded as the fact
that the {\em toric part} of such a log Calabi--Yau $3$-fold can be factored in the base of the fibration
from an appropiate birational model.
To prove the two previous theorems, we will need to study the birational geometry of certain special fibrations; the so-called standard $\pp^1$-links (see Definition~\ref{def:p1-link}).
These fibrations have appeared in the study of dual complexes and log canonical centers~\cite{KK10,FS20}.
Our main theorem in this direction states that a standard $\pp^1$-link is simply the projectivization of a split $\qq$-vector bundle of rank $2$.

\begin{theorem}\label{introthm:p1-link-char}
Let $\phi\colon (X,B)\rightarrow Z$ be a standard $\pp^1$-link.
Then, the variety $X$ is the projectivization over $Z$ of a rank $2$ split $\qq$-vector bundle. Moreover, the divisor $\lfloor B \rfloor$ consists of the reduced sum of the two sections corresponding to the projections to the summands of the split $\qq$-vector bundle.
\end{theorem}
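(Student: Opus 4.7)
The plan is to deduce the projective bundle structure from the two horizontal components of $\lfloor B\rfloor$, after reducing to a situation where these components are Cartier via a suitable cover and then descending. By the definition of a standard $\pp^1$-link (see Definition~\ref{def:p1-link}), $\phi$ has general fiber $\pp^1$ and $\lfloor B\rfloor=B_1+B_2$ consists of two prime divisors each mapping isomorphically onto $Z$. First I would check that $B_1\cap B_2=\emptyset$: if the two met at a point $p$, lc adjunction applied to the fiber $F\cong\pp^1$ through $p$ together with both sections having coefficient $1$ would violate the log canonical condition of the pair, contradicting the $\pp^1$-link structure; equivalently, the two distinct lc centers dominating the base cannot meet without producing extra components in the fiber.

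Next, I would pick $n\in\nn$ with $nB_1$ and $nB_2$ both Cartier, and form a quasi-\'etale Galois cover $p\colon \widetilde X\to X$ with Galois group $\Gamma$ on which the Weil divisors $\widetilde B_i := p^{*}B_i$ become Cartier. The induced $\pp^1$-fibration $\widetilde\phi\colon \widetilde X\to\widetilde Z$ has two disjoint Cartier sections $\widetilde B_1,\widetilde B_2$. By cohomology and base change, the push-forward $\widetilde E:=\widetilde\phi_{*}\oo_{\widetilde X}(\widetilde B_1)$ is a rank $2$ vector bundle on $\widetilde Z$, the tautological surjection $\widetilde\phi^{*}\widetilde E\to\oo_{\widetilde X}(\widetilde B_1)$ realizes $\widetilde X\cong\pp_{\widetilde Z}(\widetilde E)$, and the two transverse sections cut out two complementary line sub-bundles, producing a canonical splitting $\widetilde E\cong \widetilde L_1\oplus\widetilde L_2$.

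To conclude, I would descend this data along $p$. Since $\Gamma$ preserves each section $\widetilde B_i$ set-wise (as each $\widetilde B_i$ is the pullback of a divisor on $X$), it preserves each summand $\widetilde L_i$; therefore the $\widetilde L_i$ descend to $\qq$-line bundles $L_i$ on $Z$ with $L_i^{\otimes m}$ an honest line bundle for some $m$, and the equivariant identification $\widetilde X\cong\pp(\widetilde L_1\oplus\widetilde L_2)$ descends to $X\cong\pp_Z(L_1\oplus L_2)$ in the sense of projectivizations of split $\qq$-vector bundles. By construction the two coordinate sections of this projectivization are exactly $B_1,B_2$, giving the moreover part. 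The hardest step is the descent: one must verify that the splitting on $\widetilde X$ is $\Gamma$-equivariant with the action on each summand being trivial (rather than swapping the two summands), and formulate the descended object in the formalism of split $\qq$-vector bundles used in the statement. This ultimately reduces to careful bookkeeping of the Galois action along the sections, once the disjointness of $B_1, B_2$ and the Cartier reduction have been established.
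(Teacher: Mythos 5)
Your opening step is fine: the disjointness of $B_1$ and $B_2$ is immediate from the plt condition in Definition~\ref{def:p1-link} (two components of $\lfloor B\rfloor$ meeting would produce a non-plt point). The proof breaks down, however, at the construction of the global quasi-\'etale Galois cover $p\colon\widetilde X\to X$ on which $B_1$ and $B_2$ become Cartier. The standard way to produce such a cover is the cyclic covering $\operatorname{Spec}_X\bigl(\bigoplus_{i=0}^{n-1}\mathcal{O}_X(-iB_1)\bigr)$, which requires a trivialization $\mathcal{O}_X(-nB_1)\cong\mathcal{O}_X$, i.e.\ that $B_1$ be torsion in the class group. But $B_1$ is a $\phi$-ample section, so no multiple of it is trivial (nor is any multiple of $B_1-B_2$, which is $\qq$-linearly a pullback from $Z$). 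Such index-one covers exist only locally over $Z$ --- this is exactly the content of Lemma~\ref{lem:local-geometry-P1-link}, which the paper uses as a \emph{local} statement and then globalizes by an entirely different mechanism: passing to a resolution $Z'$ of $Z$, removing multiple fibers birationally, obtaining an honest split rank-$2$ bundle there, descending the resulting $\mathbb{G}_m$-action back to $(X,B)$ via Lemmas~\ref{lem:descending-torus-actions} and~\ref{lem:canonical-places}, and finally reconstructing $X$ as $\mathbb{P}(\mathcal{O}_{Z}\oplus\mathcal{O}_{Z}(D))$ through the Altmann--Hausen description of $\mathbb{G}_m$-varieties. Your proposal has no substitute for this globalization.

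A second, related gap is that you never address multiple fibers of $\phi$ over codimension-one points of $Z$. These genuinely occur (e.g.\ cyclic quotients of $Z_0\times\pp^1$ acting on both factors), they are precisely why $X$ is only a projectivized \emph{$\qq$-}vector bundle and why $S_0$, $S_\infty$ need not be Cartier, and along such fibers $\phi_*\mathcal{O}_X(B_1)$ fails to be a rank-$2$ locally free sheaf whose projectivization recovers $X$ (the degree of $B_1$ on the reduced fiber is $1/m$, not $1$). The cohomology-and-base-change argument you invoke therefore does not apply as stated, even after any cover that is \'etale in codimension one, since such a cover cannot be ramified over the branch divisor in $Z$ where the multiple fibers sit. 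The paper's Step~2 exists precisely to eliminate these fibers on a crepant birational model before any bundle structure can be extracted.
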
 

Using the previous characterization
of standard $\pp^1$-links, we will prove the following theorem about standard $\pp^1$-links of dimension at most three.

\begin{theorem}\label{introthm:bir-model-p1-link}
Let $X$ be a normal variety of dimension at most $3$.
Let $\phi\colon (X,B)\rightarrow Z$ be a standard $\pp^1$-link.
Let $(Z,B_Z)$ be the pair induced on $Z$ by the canonical bundle formula.
Then, we have a commutative diagram:
\[
\xymatrix{
(X,B)\ar[d]_-{\phi} & (Z',B_{Z'}) \times (\pp^1,\Sigma^1)\ar[d]^-{\pi_1}\ar@{-->}[l]_-{\psi} \\
(Z,B_Z) & (Z',B_{Z'})\ar[l]^-{\pi}
}
\]
where $\pi$ is a projective crepant 
birational morphism with $B_{Z'}$ effective, 
$\psi$ is a crepant birational map, 
and $\pi_1$ is the projection to the first component.
\end{theorem}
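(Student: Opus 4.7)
The plan is to apply Theorem~\ref{introthm:p1-link-char} to put $X$ into projectivized split form, take a dlt modification of the base to ensure effectiveness, and construct the birational map by trivializing the $\qq$-line bundle on a common dense open subset of the base.

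By Theorem~\ref{introthm:p1-link-char}, there is a $\qq$-line bundle $\mathcal{L}=\mathcal{O}_Z(D)$ on $Z$ with $X\cong \mathbb{P}_Z(\mathcal{O}_Z\oplus \mathcal{L})$ and $\lfloor B\rfloor = D_0+D_\infty$ the two tautological sections. Combining this with the standard $\pp^1$-link hypothesis, I would first verify that the moduli part of the canonical bundle formula vanishes and that $B = D_0 + D_\infty + \phi^*B_Z$. Then I would take $\pi\colon Z'\to Z$ to be a dlt modification of $(Z,B_Z)$, which yields a projective crepant birational morphism with $B_{Z'}$ effective. The pair $(Z'\times\pp^1,\,\pi_1^*B_{Z'}+\pi_2^*\Sigma^1)$ is then log Calabi--Yau, as a product of two log Calabi--Yau pairs.

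To build $\psi$, let $U:=Z\setminus\bigl(\Supp D\cup \pi(\operatorname{Exc}\pi)\bigr)$, a dense open subset of $Z$ over which $\pi$ is an isomorphism and $\mathcal{L}$ is trivializable. After fixing a trivialization $\mathcal{L}|_U\cong \mathcal{O}_U$, the induced isomorphism $X|_U\cong U\times\pp^1$ sends $D_0|_U$ and $D_\infty|_U$ to $U\times\{0\}$ and $U\times\{\infty\}$, and sends $\phi^*B_Z|_{X|_U}$ to $(B_Z|_U)\times\pp^1$; through the identification $\pi^{-1}(U)\cong U$, which matches $B_{Z'}|_{\pi^{-1}(U)}$ with $B_Z|_U$ by crepancy of $\pi$, this yields an isomorphism of pairs from the restriction of $(Z'\times\pp^1,\,\pi_1^*B_{Z'}+\pi_2^*\Sigma^1)$ to $\pi^{-1}(U)\times\pp^1$ onto $(X|_U,B|_{X|_U})$. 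This extends uniquely to the sought birational map $\psi\colon Z'\times\pp^1\dashrightarrow X$. Since $\psi$ restricts to an isomorphism of pairs between two log Calabi--Yau pairs on a dense open, it is automatically crepant, and commutativity of the diagram follows from $\phi\circ\psi = \pi\circ \pi_1$ on $\pi^{-1}(U)\times\pp^1$.

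The technically delicate step is the initial decomposition $B=D_0+D_\infty+\phi^*B_Z$: I must rule out other horizontal components in $B$ and show that the moduli part of the canonical bundle formula vanishes. This combines Definition~\ref{def:p1-link} with Theorem~\ref{introthm:p1-link-char} and uses the specific geometry of standard $\pp^1$-links. Once that is in hand, the remainder of the argument is an essentially tautological matching of boundaries over the open locus where $\mathcal{L}$ is trivial.
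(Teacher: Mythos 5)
There is a genuine gap, and it sits exactly where you flag the ``technically delicate step.'' The decomposition $B=D_0+D_\infty+\phi^*B_Z$ is \emph{false} in general for a standard $\pp^1$-link, because of multiple fibers. Concretely, take $X=(\mathbb{A}^1_t\times\pp^1_z)/\zz_2$ with the action $(t,z)\mapsto(-t,-z)$, $B=S_0+S_\infty$ the images of $\{z=0\}$ and $\{z=\infty\}$, and $Z=\mathbb{A}^1_u$ with $u=t^2$. This is a standard $\pp^1$-link with $\phi^*\{u=0\}=2E$ for the reduced fiber $E$; the discriminant is $B_Z=\tfrac12\{u=0\}$, so $\phi^*B_Z=E$ while the vertical part of $B$ is zero. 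More importantly, your justification of crepancy --- ``$\psi$ restricts to an isomorphism of pairs on a dense open, hence is automatically crepant'' --- is not a valid principle: an isomorphism over a dense open of the base says nothing about the log discrepancies of the divisors lying over the complement, and those are precisely the delicate ones. In the example above, $\psi$ contracts the divisor $\{0\}\times\pp^1$ (coefficient $\tfrac12$ in $\Gamma$) to the $A_1$-point of $X$, while $\psi^{-1}$ contracts $E$ (coefficient $0$ in $B$) to a point of $\{0\}\times\pp^1$; crepancy then amounts to the nontrivial identities $a(\{0\}\times\pp^1;X,B)=\tfrac12$ and $a(E;Z\times\pp^1,\Gamma)=1$, which hold but must be proved, not inferred from the generic trivialization. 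The honest route to rescue your argument is to show that $K_X+B=\phi^*(K_Z+B_Z)$ and $K_{Z'\times\pp^1}+\Gamma=(\pi\circ\pi_1)^*(K_Z+B_Z)$ as actual equalities (i.e., that the moduli part of $\phi$ vanishes as a divisor for compatible choices of canonical divisors), and then conclude crepancy from commutativity of the diagram over $Z$; that vanishing is itself a statement you have not established.

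This is also where your approach diverges from the paper's. The paper does not invoke any ``automatic crepancy'': it passes to a minimal resolution $Z'\rightarrow Z$ (which keeps $B_{Z'}$ effective since $(Z,B_Z)$ is klt of dimension at most two), explicitly eliminates the multiple fibers by extracting the orbifold divisors of log discrepancy $\tfrac1m$ and contracting the old degenerate fibers (Step 2 of the proof of Theorem~\ref{introthm:p1-link-char}), so that $X''\rightarrow Z'$ becomes an honest projectivized rank-$2$ split vector bundle, and then trivializes it by elementary transformations (blow up points on the section $S_0''$, blow down the strict transforms of the fibers), checking at each stage that the map is crepant and the boundary stays effective. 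Your dlt modification of the klt pair $(Z,B_Z)$ is essentially small, so it does none of this work; the multiple-fiber locus and the trivialization of the $\qq$-bundle are handled by nothing in your write-up. The strategy is salvageable, but as written the two key assertions it rests on are, respectively, false and unproved.
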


Projectivizations of split $\qq$-vector bundles over the same base are always birational equivalent to each other.
Thus, we emphasize that the interesting part of Theorem~\ref{introthm:bir-model-p1-link} is that we can realize such birational equivalence by keeping the pair log Calabi--Yau, i.e., not introducing boundary divisor with negative coefficients.

\subsection*{Acknowledgements}

The author would like to thank 
Joshua Enwright, Fernando Figueroa, 
Konstantin Loginov, and Stefano Filipazzi 
for very useful comments.

\section{Preliminaries}
In this section, we recall some preliminary results and prove some lemmas. For the concepts of singularities of the MMP, we refer the reader to~\cite{Kol13}.
For the concepts related to the canonical bundle formula, we refer the reader to~\cite{Amb04,FG12,Fil18}.
We work over an algebraically closed field $\kk$ of characteristic zero.
Given a pair $(X,B)$ and a closed irreducible subvariety $Z$, we write $(X,B;Z)$ for the germ of $(X,B)$ at the generic point of $Z$.

\subsection{Crepant maps}
In this subsection, we recall preliminaries regarding log Calabi--Yau pairs, crepant birational maps, and crepant birational models.

\begin{definition}
{\em 
A {\em contraction} $f\colon X\rightarrow Y$ is a morphism for which $f_*\mathcal{O}_X= \mathcal{O}_Y$.
In particular, if $X$ is normal, then $Y$ is normal.
A {\em fibration} is a contraction with positive dimensional general fiber.
}
\end{definition} 

\begin{definition}
{\em 
Let $f\colon X\rightarrow Y$ be a finite morphism.
Let $(X,B)$ and $(Y,B_Y)$ be two sub-pairs.
We say that $f\colon (X,B)\rightarrow (Y,B_Y)$ is {\em crepant} if $f^*(K_Y+B_Y)=K_X+B$.
Let us emphasize that we ask the equality to hold and not just a $\qq$-linear equivalence.
}
\end{definition} 

\begin{definition}
{\em 
Let $f\colon X\rightarrow Z$ be a projective contraction.
Let $(X,B)$ be a log pair on $X$.
We say that $(X,B)$ is {\em log Calabi--Yau over $Z$} if $(X,B)$ is log canonical and 
$K_X+B\sim_{\qq,Z} 0$.
The {\em index} of $(X,B)$ over $Z$
is the smallest positive integer $i$ for which $i(K_X+B)\sim_Z 0$.
We drop $Z$ from the notation when $Z={\rm Spec}(\kk)$.
}
\end{definition}

\begin{definition}\label{def:crepant-model}
{\em 
Let $X\rightarrow Z$ be a projective contraction
and $\phi\colon X\dashrightarrow X'$
be a birational map over $Z$.
Let $(X,B)$ and $(X',B')$ be two sub-pairs.
We say that $\phi\colon (X,B)\dashrightarrow (X',B')$ is {\em crepant} if the following conditions are satisfied:
\begin{enumerate}
\item there exists a common resolution $Y$ with projective morphisms $p \colon Y\rightarrow X$ and  $q\colon Y\rightarrow X'$, and 
\item we have $p^*(K_X+B)=q^*(K_{X'}+B')$.
\end{enumerate} 
In the previous case, we say that $\phi$ is a {\em crepant birational map} for the sub-pairs $(X,B)$ and $(X',B')$.
Furthermore, we say that $(X,B)$ and $(X',B')$ are {\em crepant birational equivalent} and write 
\[
(X,B)\simeq_{\rm cbir}(X',B').
\]
We say that $(X',B')$ is a {\em crepant birational model} of $(X,B)$ over $Z$.

Given a birational map $X\dashrightarrow X'$ over $Z$ and a sub-pair $(X',B')$, there is a unique sub-pair $(X,B)$ for which $(X,B)\simeq_{\rm cbir}(X',B')$. This sub-pair $(X',B')$ is called the {\em log pull-back} of $(X,B)$ to $X'$.
}
\end{definition}

\begin{remark}{\em 
If $(X,B)$ is log Calabi--Yau over $Z$
and $(X',B')$ is crepant birational equivalent to $(X,B)$ over $Z$, then $(X',B')$ is log Calabi--Yau over $Z$ provided that $B'$ is effective.
Furthermore, the index of $(X,B)$ and $(X',B')$ over $Z$ agree (see, e.g.,~\cite[Lemma 3.2]{FMM22}).
}
\end{remark}

\begin{definition}
{\em 
Let $(X,B)$ be a log Calabi--Yau pair over $Z$.
A crepant birational model $(X',B')$ over $Z$ is said to be a {\em log Calabi--Yau crepant birational model} if $B'\geq 0$, or analogously, if $(X',B')$ is log Calabi--Yau over $Z$.
}
\end{definition}

\begin{definition}
\label{def:crepant-contraction}
{\em 
Let $\phi\colon X\rightarrow Z$ be a projective contraction.
Let $(X,B)$ be a log Calabi--Yau pair over $Z$.
Let $(Z,B_Z)$ be the log canonical pair induced by the canonical bundle formula.
We say that $\phi\colon (X,B)\rightarrow (Z,B_Z)$ is a {\em crepant contraction}.
Analogously, we say that $\phi$ is a {\em crepant contraction} for the log pairs $(X,B)$ and $(Z,B_Z)$.
}
\end{definition}

\begin{definition}
{\em 
Let $(X,B)$ be a log pair.
The {\em coregularity} of $(X,B)$, denoted by ${\rm coreg}(X,B)$, is the dimension of 
a minimal~\footnote{minimal with respect to the inclusion.} log canonical center on a dlt modification of $(X,B)$. 
}
\end{definition}

\subsection{Complexity} In this subsection, we recall the concepts of complexity and birational complexity.

\begin{definition}
{\em
Let $(X,B)$ be a log Calabi--Yau pair.
The {\em complexity} of $(X,B)$ is 
\[
c(X,B):=\dim X + \dim_\qq {\rm Cl}_\qq(X) -|B|,
\]
where $|B|$ is the sum of the coefficients of $B$.
}
\end{definition}

\begin{definition}
{\em 
Let $(X,B)$ be a log Calabi--Yau pair. 
The {\em birational complexity} of $(X,B)$ is
\[
c_{\rm bir}(X,B):=\min \{ c(X',B') \mid 
(X',B')\simeq_{\rm cbir}(X,B)
\text{ and $B'\geq 0$}
\}.
\]
In other words, the birational complexity of $(X,B)$
is the smallest complexity among
log Calabi--Yau crepant birational models
of $(X,B)$.
}
\end{definition} 

By~\cite[Theorem 1.2]{BMSZ18}, we know that $c_{\rm bir}(X,B)\geq 0$ for every log Calabi--Yau pair.

\subsection{Degenerate divisors}

We recall the definition of degenerate divisors for proper surjective morphisms and prove a statement about contraction of degenerate divisors.

\begin{definition}
{\em 
Let $f\colon X\rightarrow Y$ be a proper surjective morphism and $D$ be an effective $\qq$-divisor on $X$. 
\begin{enumerate}
\item[(i)] We say that $D$ is {\em exceptional} over $Y$ if the codimension of $f(D)$ is at least two in $Y$.
\item[(ii)] We say that $D$ is of {\em insufficient fiber type} over a prime divisor $P$ of $Y$ if there is a prime divisor on $X$ that dominates $P$ and is not contained in the support of $D$.
\item[(iii)] We say that $D$ is {\em degenerate} over $Y$ if $D$ is of insufficient fiber type over every prime divisor of $Y$.
\end{enumerate}
}
\end{definition} 

\begin{lemma}\label{lem:contracting-degen}
Let $(X,B)$ be a $\qq$-factorial log canonical pair. 
Let $X\rightarrow Z$ be a contraction of relative dimension at most three.
Assume that $K_X+B\sim_{\qq,Z} E$, where $E$ is an effective divisor that is degenerate over $Z$. 
Then, the $(K_X+B)$-MMP over $Z$ terminates after contracting all the components of $E$.
\end{lemma}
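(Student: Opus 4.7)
The plan is to run a $(K_X+B)$-MMP over $Z$, identify it with an $E$-MMP since $K_X+B\sim_{\qq,Z}E$ with $E$ effective, and then use a negativity-type lemma for degenerate divisors to force the resulting nef output to vanish. At every extremal ray $R$ produced by the MMP we have $E\cdot R=(K_X+B)\cdot R<0$, so a component of $\Supp(E)$ must be covered by curves of class $R$: divisorial contractions therefore remove a prime component of $E$, while flipping loci are contained in $\Supp(E)$. Because no new prime divisors of $X$ are created under pushforward, the strict transform of $E$ remains effective, $\qq$-Cartier, and degenerate over $Z$ throughout the process.

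The first technical point is termination. The relative dimension of $X \to Z$ is at most three, so every relative extremal ray lives inside a fiber of dimension at most three, and termination of the $(K_X+B)$-MMP over $Z$ in this range reduces to termination of log canonical MMPs in dimension three, which is known; alternatively, one may exploit that an $E$-MMP for effective $E$ strictly decreases the number of divisorial components of $\Supp(E)$ between consecutive divisorial contractions, while flipping sequences in relative dimension $\leq 3$ terminate. Let $(X',B')$ denote the output model, $f'\colon X'\to Z$ the induced contraction, and $E'\geq 0$ the pushforward of $E$; by construction $K_{X'}+B'\sim_{\qq,Z}E'$ is $f'$-nef and $E'$ is still degenerate over $Z$.

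The main step is to invoke Shokurov's negativity lemma for degenerate divisors: if $D$ is a $\qq$-Cartier divisor on $X'$ that is degenerate over $Z$ and such that $-D$ is nef over $Z$, then $D\geq 0$. Applied with $D:=-E'$, whose support coincides with $\Supp(E')$ and is therefore still degenerate, and using that $-D=E'$ is $f'$-nef, this yields $-E'\geq 0$; together with $E'\geq 0$ we conclude $E'=0$. Hence every component of $E$ has been contracted along the MMP, which is exactly the assertion of the lemma. The main obstacle I expect is verifying termination over $Z$ in relative dimension three for log canonical pairs; once this is in place, Shokurov's degenerate divisor lemma delivers the conclusion with no further input.
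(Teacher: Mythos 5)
Your argument is correct in substance and reaches the conclusion by the same overall strategy as the paper --- run the relative $(K_X+B)$-MMP, identify it with an $E$-MMP, and show that precisely the components of $E$ get contracted --- but the key input is different. The paper quotes Lai's lemma to place $\Supp(E)$ inside the restricted base locus ${\rm Bs}_-(K_X+B/Z)$, so that any MMP reaching a relative minimal model must contract every component of $E$, and then observes that once $E$ is gone the pair is log Calabi--Yau over $Z$, forcing termination at that point. You instead argue forward along the MMP (each divisorial contraction removes a component of $E$, each flipping locus lies in $\Supp(E)$) and then apply the negativity lemma for degenerate (very exceptional) divisors to the nef output $E'$ to conclude $E'=0$; this is essentially Birkar's ``special LMMP'' argument (Theorem 3.5 of \emph{Existence of log canonical flips and a special LMMP}), of which Lai's lemma is in any case a close relative. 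Your route has the advantage of directly certifying that the terminal model carries no residue of $E$, rather than relying on the general fact that divisors in ${\rm Bs}_-$ are contracted by a minimal model program. Two small points of care: the negativity statement you invoke should be phrased for divisors whose \emph{support} is degenerate (you apply it to $-E'$, which is not effective, and the definition of degenerate in this paper is stated for effective divisors), and it implicitly requires the components of $E$ to be vertical over $Z$, as in Birkar's notion of ``very exceptional'' --- the same reading is needed for the paper's citation of Lai, so this is not a defect of your argument. Both proofs treat termination in relative dimension at most three with the same brevity; your remark that divisorial contractions strictly decrease the number of components of $E$ controls those steps but does not by itself rule out infinite flip sequences, so the appeal to known termination (or to MMP with scaling for effective pairs) is still needed.
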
 

\begin{proof}
By~\cite[Lemma 2.9]{Lai11}, we know that $\supp(E)\subseteq {\rm Bs}_{-}(K_X+B/Z)$.
Then, we can run a $(K_X+B)$-MMP over $Z$ that contracts all the components of $E$.
Since the relative dimension of $X\rightarrow Z$ is at most three, we know that this MMP terminates with a good minimal model over $Z$. 
Let $X\dashrightarrow X'$ be a birational contraction induced by the MMP on which all the components of $E$ are contracted. If $B'$ is the push-forward of $B$ to $X'$, then $(X',B')$ is log Calabi--Yau over $Z$. Thus, the MMP terminates on this model.
\end{proof}

\begin{lemma}\label{lem:2-to-1}
Let $\phi\colon X\rightarrow Z$ be a Mori fiber space of relative dimension one.
Let $(X,B)$ be a log Calabi--Yau pair over $Z$.
Assume there is a normal prime component $S\subseteq \lfloor B\rfloor$ 
for which $\phi|_S \colon S\rightarrow Z$ is a finite morphism of degree two.
Let $(Z,B_Z)$ be the log Calabi--Yau pair induced by the canonical bundle formula.
Let $Z'\rightarrow Z$ be a birational morphism that only extracts divisors with log discrepancy at most $\frac{1}{2}$ with respect to $(Z,B_Z)$.
Let $(Z',B_{Z'})$ be log pull-back of $(Z,B_Z)$ to $Z'$. 
Then, there exists a crepant birational model $(X',B')$ of $(X,B)$, with $B'$ effective, satisfying the following.
The variety $X'$ admits a Mori fiber space of relative dimension one to $Z''$, where $Z''\rightarrow Z'$ is a dlt modification of $(Z',B_{Z'})$.
\end{lemma}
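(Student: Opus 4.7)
The plan is to construct $(X', B') \to Z''$ by running a carefully chosen relative MMP on a common log resolution of $X$ and $Z''$, using the degree-two cover $\pi = \phi|_S \colon S \to Z$ as a bridge to transfer log-discrepancy information from $(Z, B_Z)$ back to $(X, B)$.

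The key discrepancy calculation would be the following. By adjunction, the pair $(S, B_S)$ defined by $K_S + B_S = (K_X + B)|_S$ is log canonical, and since $\pi$ is finite of degree two it is crepant to $(Z, B_Z)$: one has $K_S + B_S = \pi^*(K_Z + B_Z)$, with the discriminant part of $B_Z$ encoded by the Riemann--Hurwitz contribution of $\pi$. For a prime divisor $E$ over $Z$ with log discrepancy $a \leq 1/2$ with respect to $(Z, B_Z)$, the ramification formula shows that the preimage divisors on $S$ have log discrepancy either $a$ (if $\pi$ is \'etale over $E$, in which case two preimages appear) or $2a$ (if $\pi$ is ramified of index two). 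In both cases this is at most $1$, so every such preimage has non-negative coefficient in the log pullback of $(S, B_S)$. The threshold $1/2$ in the hypothesis is chosen exactly so that this holds.

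Next, I would take a common log resolution $W \to X$ together with a morphism $W \to Z''$ resolving the indeterminacy of $X \dashrightarrow Z''$, and let $(W, B_W)$ be the log pullback of $(X, B)$, so that $(W, B_W) \to (Z'', B_{Z''})$ is crepant via the canonical bundle formula. All divisors extracted on $Z''$ over $Z$ have log discrepancy $\leq 1/2$ with respect to $(Z, B_Z)$: those coming from $Z' \to Z$ by hypothesis, and those coming from the dlt modification $Z'' \to Z'$ because they are log canonical places, of log discrepancy $0 < 1/2$. By the previous paragraph combined with adjunction along the strict transform of $S$ in $W$, their preimages on $W$ have log discrepancy $\leq 1$ with respect to $(X, B)$ and hence appear with non-negative coefficient in $B_W$. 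Running a $(K_W + B_W)$-MMP over $Z''$, which terminates by Lemma~\ref{lem:contracting-degen} applied to the degenerate negative part of $B_W$ (the relative dimension is at most $3$), produces a relative good minimal model $(X', B')$ with $B' \geq 0$ admitting a morphism to $Z''$. A further relative $(-K_{X'})$-MMP over $Z''$, if needed, turns $X' \to Z''$ into a Mori fiber space of relative dimension one, and the canonical bundle formula applied to this MFS identifies the base pair with $(Z'', B_{Z''})$.

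The main obstacle will be controlling, through the MMP, the simultaneous preservation of the effectiveness of $B'$, the Mori fiber space structure $X' \to Z''$ of relative dimension one, and the identification of $Z''$ with the prescribed dlt modification of $(Z', B_{Z'})$. In particular, I must ensure that no component of $W$ dominating an extracted divisor on $Z''$ is contracted inadvertently during the MMP, which is exactly guaranteed by the log-discrepancy bound $\leq 1$ from the discussion above; and that the base of the resulting MFS is exactly $Z''$, not a further blowdown, which follows from $W \to Z''$ being a morphism preserved through the $Z''$-relative MMP.
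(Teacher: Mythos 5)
Your proposal is essentially correct in its key mechanism and takes a genuinely different route from the paper. You correctly isolate the crucial point --- the threshold $\tfrac12$ exists precisely so that, under the crepant degree-two morphism $(S,B_S)\rightarrow (Z,B_Z)$, every divisor over $Z$ with log discrepancy $\leq \tfrac12$ pulls back to divisors over $S$ with log discrepancy $\leq 1$. The paper exploits this by passing to the normalization $Y$ of $S\times_Z X$, where $S$ becomes a rational section; it extracts the relevant lc and canonical places $\mathbb{Z}_2$-equivariantly on $Y$, takes the quotient back down to a model $X_0\rightarrow Z'$, and only then runs the MMP of Lemma~\ref{lem:contracting-degen}, finishing with a $K$-MMP and an appeal to \cite[Lemma 2.10]{MM24}. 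You instead work directly on a log resolution $W$ of $X$ mapping to the modified base and use adjunction along the strict transform $S_W$ to verify the degeneracy hypothesis: each divisor $Q$ of the normalized fiber product $S\times_Z Z''$ lying over an extracted divisor $P$ is a divisor on $S_W$ of log discrepancy $\leq 1$, hence lies on a component $E$ of the fiber over $P$ with $\operatorname{coeff}_E(B_W)=\operatorname{coeff}_Q(B_{S_W})\geq 0$ by the snc adjunction formula. This avoids the equivariant double-cover machinery entirely, which is a real simplification; the price is that this adjunction step must be spelled out, since your phrase ``their preimages on $W$ have log discrepancy $\leq 1$'' is false for \emph{all} components of $\phi_W^{-1}(P)$ and is only guaranteed for the one meeting the relevant divisor of $S_W$, which is all you need.

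There is one genuine gap at the end. After the degenerate-divisor MMP produces a log Calabi--Yau model $(X_1,B_1)\rightarrow Z''$ with $B_1\geq 0$, the subsequent $K_{X_1}$-MMP over $Z''$ terminates with a Mori fiber space $X'\rightarrow T$ where $T\rightarrow Z''$ is a priori only a projective birational morphism, possibly nontrivial (a two-dimensional fiber of $X_1\rightarrow Z''$ can be contracted onto a curve of $T$ lying over a point of $Z''$). Your claim that the base is ``exactly $Z''$'' because $W\rightarrow Z''$ is a morphism preserved through the MMP rules out a blow-\emph{down} of $Z''$ but not a blow-\emph{up}. The fix is the one the paper uses: every divisor extracted by $T\rightarrow Z''$ is dominated by a vertical lc center of $(X',B')$, hence is an lc place of $(Z',B_{Z'})$, so after replacing $T$ by a dlt modification and invoking \cite[Lemma 2.10]{MM24} one lands on a base that is a dlt modification of $(Z',B_{Z'})$ --- which suffices, since the statement is existential in $Z''$.
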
 

\begin{proof}
Let $Y$ be the normalization of the fiber product $S\times_Z X$.
Let $(Y,B_Y)$ be the log pull-back of $(X,B)$ to $(Y,B_Y)$.
Then, the boundary divisor $\lfloor B_Y\rfloor$ contains a prime component that dominates $Y$ with a projective birational morphism.
Let $S'$ be the normalization of the fiber product $Z'\times_Z S$ and $(S',B_{S'})$ be the log pull-back of $(S,B_S)$ to $S'$.
Note that $(S',B_{S'})\rightarrow (Z',B_{Z'})$ is a crepant finite surjective morphism of degree $2$.
Thus, by Riemann-Hurwitz we have that $B_{S'}\geq 0$.
In particular, the morphism
$S'\rightarrow S$ is $\zz_2$-equivariant and only extract log canonical places
and canonical places of $(S,B_S)$.
By adjunction, each such place induces either a log canonical place or a canonical place of $(Y,B_Y)$.
Henceforth, we can find a $\zz_2$-equivariant projective birational morphism $Y'\rightarrow Y$ that extracts all such places.
Thus, we get a $\zz_2$-equivariant fibration $Y'\rightarrow S'$.
Let $(Y',B_{Y'})$ be the log pull-back of $(Y,B_Y)$ to $Y'$.
Without loss of generality, we may assume that $(Y',B_{Y'})$ is log smooth.
The divisor $B_{Y'}$ may have negative coefficients, however, over each prime divisor of $S'$ there is a prime divisor of $B_{Y'}$ with non-negative coefficient.
Let $(X_0,B_0)$ be the quotient of $(Y',B_{Y'})$ by $\zz_2$.
The variety $X_0$ admits a fibration to $Z'$. Some coefficients of $B_0$ may be negative, however, over each prime divisor of $Z'$ there is a prime divisor of $B_0$ with non-negative coefficient.

We construct a divisor $B'_0$ on $X_0$ as follows.
We increase all the negative coefficients of $B_0$ to zero and the coefficients of the prime degenerate divisors of $B_0$ over $Z'$ to one.
By further blowing up, we may assume that $(X_0,B'_0)$ is log smooth.
We run a $(K_{X_0}+B'_0)$-MMP over $Z'$.
This MMP terminates after contracting all the components of $B_0$ with negative coefficients and all the exceptional divisors of $B_0$ over $Z'$ that are not log canonical places. The previous statement follows by Lemma~\ref{lem:contracting-degen}.
Let $X_1\rightarrow Z'$ be the model where this MMP terminates and $B_1$ be the push-forward of $B_0$ to $X_1$.
Then, the pair $(X_1,B_1)$ is log Calabi--Yau over $Z'$ and is crepant birational equivalent to $(X,B)$.
We run a $K_{X_1}$-MMP over $Z'$ that terminates with a Mori fiber space of relative dimension one $X'\rightarrow Z_0$ over $Z'$.
Any divisor extracted by $Z_0\rightarrow Z'$ is the image of a log canonical center of $(X_1,B_1)$. 
Thus, we conclude that $Z_0\rightarrow Z'$ only extract log canonical places of $(Z',B_{Z'})$.
Let $(Z_0,B_{Z_0})$ be the log pull-back of $(Z',B_{Z'})$ to $Z_0$.
Then, the statement follows by applying~\cite[Lemma 2.10]{MM24}
to a dlt modification $Z''$ of $(Z_0,B_{Z_0})$.
\end{proof} 

\subsection{The two-ray game}
In this subsection, we prove two lemmas that are special versions of the two-ray game.

\begin{lemma}\label{lem:going-down}
Let $X$ be a normal variety of dimension at most three.
Let $\phi\colon X\rightarrow Z$ be a Mori fiber space to a $\qq$-factorial variety.
Let $(X,B)$ be a log Calabi--Yau pair over $Z$.
Let $(Z,B_Z)$ be the pair induced on $Z$ by the canonical bundle formula.
Assume that the support of $\phi^*\lfloor B_Z\rfloor$ is contained in the support of $\lfloor B\rfloor$.
Let $Z\rightarrow Z'$ be a divisorial contraction that is defined by an extremal ray $R$ that intersects some prime component of $\lfloor B_Z\rfloor$ positively.
Then, there exists a crepant birational model $(X',B')$ of $(X,B)$, with $B'$ effective, that admits a Mori fiber space to $Z'$.
\end{lemma}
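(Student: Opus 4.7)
My strategy will be a relative two-ray game over $Z'$. Since $\phi$ has relative Picard rank one and so does $\pi\colon Z\rightarrow Z'$, we have $\rho(X/Z')=2$, so $\overline{NE}(X/Z')$ is spanned by exactly two extremal rays $R_1,R_2$: the fiber ray $R_1$ of $\phi$ and a ray $R_2$ whose curves project under $\phi_*$ to multiples of $R$. The plan is to run a suitably perturbed MMP on $X$ over $Z'$ that leaves $R_1$ untouched, contracts $R_2$ (and its successors), and terminates with a Mori fiber space $X'\rightarrow Z'$.

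For the perturbation, let $D\subseteq\lfloor B_Z\rfloor$ be a prime component with $D\cdot R>0$ (which exists by hypothesis) and put $\tilde B:=B-\epsilon\phi^*D$ for $\epsilon>0$ small. By the support condition every prime component of $\phi^*D$ has coefficient one in $B$, so $\tilde B$ is effective, and $(X,\tilde B)$ is klt along $\phi^*D$ and log canonical elsewhere. From
\[
K_X+\tilde B\sim_{\qq}\phi^*(K_Z+B_Z-\epsilon D)
\]
one sees $K_X+\tilde B$ is trivial on $R_1$. Since $R$ defines a divisorial contraction of $(Z,B_Z)$, we have $(K_Z+B_Z)\cdot R<0$, and combined with $-\epsilon D\cdot R<0$ this yields $(K_X+\tilde B)\cdot R_2<0$.

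I will then run the $(K_X+\tilde B)$-MMP over $Z'$ (first passing to a small $\qq$-factorialization of $X$ if necessary). Letting $E$ be the $\pi$-exceptional divisor, an intersection with a curve of $R$ gives $K_Z+B_Z-\epsilon D\sim_{\qq,Z'}aE$ with $a>0$, so
\[
K_X+\tilde B\sim_{\qq,Z'}a\phi^*E,
\]
with $\phi^*E$ effective and degenerate over $Z'$ (since $\pi(E)$ has codimension at least two in $Z'$). By Lemma~\ref{lem:contracting-degen}, the MMP terminates after contracting every component of $\phi^*E$. None of these steps can touch $R_1$ because $K_X+\tilde B$ is $R_1$-trivial, so the positive-dimensional general fiber of $\phi$ is preserved in the terminal model $X'$. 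Each divisorial contraction lowers $\rho(\cdot/Z')$ by one, so we reach $\rho(X'/Z')=1$, and the induced morphism $\phi'\colon X'\rightarrow Z'$ is a fibration.

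Finally, let $\psi\colon X\dashrightarrow X'$ be the induced birational map and set $B':=\psi_*B=\psi_*\tilde B+\epsilon\psi_*\phi^*D$. Both summands are effective so $B'\geq 0$, and the crepant equivalence $(X,B)\simeq_{\rm cbir}(X',B')$ follows from the crepancy of $\psi$ for $(X,\tilde B)$ after adding the pulled-back divisor $\epsilon\phi^*D$ back. Because $(X',B')$ is log Calabi--Yau over $Z'$ and $\phi'$ has relative Picard rank one, the $\phi'$-horizontal part of $B'$ (the push-forward of the $\phi$-horizontal part of $B$) forces $-K_{X'}$ to be relatively ample over $Z'$, so $\phi'$ is a Mori fiber space. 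The most delicate point I foresee is confirming that the MMP output really fibers over $Z'$, rather than over some intermediate base; this is controlled by Lemma~\ref{lem:contracting-degen} together with the relative Picard-rank bookkeeping, which force $\rho(X'/Z')=1$ and let the surviving fiber ray of $\phi$ produce the desired MFS.
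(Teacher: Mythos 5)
Your proof is correct and follows essentially the same route as the paper's: subtract $\epsilon\phi^*S$ for a component $S$ of $\lfloor B_Z\rfloor$ positive on $R$ (effectivity and log canonicity coming from the support hypothesis), observe that $K_X+B-\epsilon\phi^*S\sim_{\qq,Z'}a\,\phi^*E$ with $a>0$ forces $\phi^*E$ into the relative diminished base locus, run the corresponding MMP over $Z'$ to contract $\phi^*E$, and read off the Mori fiber space from $\rho(X'/Z')=1$ together with the horizontal part of $B'$. The one quibble is your justification ``since $R$ defines a divisorial contraction of $(Z,B_Z)$, we have $(K_Z+B_Z)\cdot R<0$,'' which does not follow from the stated hypotheses; what is actually needed (and what the paper's own argument implicitly uses as well) is $(K_Z+B_Z)\cdot R\leq 0$, which holds in the intended applications because $(Z,B_Z)$ is log Calabi--Yau, after which $S\cdot R>0$ supplies the strict negativity.
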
 

\begin{proof}
Note that $\rho(X/Z')=2$.
Let $E$ be the divisor being contracted by 
$Z\rightarrow Z'$.
Let $S$ be a prime component of $\lfloor B_Z\rfloor$ that intersects $R$ positively.
Then, for $\epsilon>0$, we have that 
$(X,B-\epsilon \phi^*S)$ is a log canonical pair.
Furthermore, the divisor $K_X+B-\epsilon \phi^* S$ contains $\phi^*E$ on its relative diminished base locus over $Z'$.
Indeed, we have 
$K_X+B-\epsilon \phi^*S \sim_\qq \phi^*(K_Z+B_Z-\delta S)$ for a suitable $\delta>0$ and 
$\phi^*E$ is covered by curves whose images on $Z$ intersect the divisor $K_Z+B_Z-\delta S$ negatively and map to points on $Z'$.
Then, we may run a $(K_X+B-\epsilon \phi^*S)$-MMP over $Z'$ that will terminate with a good minimal model $X'\rightarrow Z'$ after contracting the divisor $\phi^*E$.
This good minimal model has relative Picard rank over $Z'$ and hence it is a Mori fiber space for $K_{X'}$.
\end{proof} 

\begin{lemma}\label{lem:going-down-2}
Let $X$ be a normal variety of dimension at most three.
Let $\phi\colon X\rightarrow Z$ be a Mori fiber space to a $\qq$-factorial variety.
Let $(X,B)$ be a log Calabi--Yau pair over $Z$.
Let $(Z,B_Z)$ be the pair induced on $Z$ by the canonical bundle formula.
Let $Z\rightarrow Z'$ be a divisorial contraction that contracts the curve $C$.
Assume that $C$ contains no log canonical centers of $(Z,B_Z)$.
Then, there is a crepant birational model $(X',B')$ of $(X,B)$, with $B'$ effective, that admits a Mori fiber space to $Z'$.
\end{lemma}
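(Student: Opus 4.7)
My plan is to adapt the two-ray game used in Lemma~\ref{lem:going-down}. Since $\phi$ is a Mori fiber space of positive relative dimension with $\dim X \leq 3$, we must have $\dim X = 3$, $\dim Z = 2$, and $\phi$ of relative dimension one; consequently $C$ is the exceptional divisor of $p \colon Z \to Z'$, and $\phi^{*}C$ is a prime divisor on $X$. By multiplicativity of relative Picard numbers, $\rho(X/Z') = 2$, so $\overline{NE}(X/Z')$ has exactly two extremal rays: the fiber class $F$ of $\phi$, whose contraction is $\phi$ itself, and a second ray $R$ spanned by the class of a general horizontal curve $\gamma \subseteq \phi^{-1}(C)$ over $C$.

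I first verify that $R$ is $K_X$-negative. From $(K_X+B)\cdot F=0$ and $K_X\cdot F=-2$ we obtain $B\cdot F=2$, so $B$ has horizontal components; picking $\gamma$ sufficiently general gives $B\cdot\gamma>0$, and therefore $K_X\cdot\gamma<0$ since $(K_X+B)\cdot\gamma=0$. Next, choose an ample divisor $H$ on $Z$. Then $\phi^{*}H \cdot F = 0$ and $\phi^{*}H \cdot \gamma > 0$, so for small $\epsilon>0$ the ray $R$ is the unique $(K_X+B-\epsilon\,\phi^{*}H)$-negative extremal ray over $Z'$. Running the corresponding MMP, its first step is the $R$-ray contraction $\psi\colon X\to X^{*}$.

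The crucial point is that $\psi$ is divisorial with exceptional locus $\phi^{*}C$. This is where the hypothesis on lc centers enters: if $\psi$ were small or flipping, some curve inside $\phi^{-1}(C)$ transverse to the horizontal locus would be $\psi$-exceptional, and via adjunction together with the canonical bundle formula applied to $\phi$, such a curve would yield an lc center of $(Z,B_Z)$ strictly contained in $C$, contradicting the hypothesis. Hence $\psi$ contracts $\phi^{*}C$ onto a curve in $X^{*}$, and no further MMP steps are required to reach a log Calabi--Yau model over $Z'$.

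Setting $(X',B'):=(X^{*},\psi_{*}B)$, the pair $(X',B')$ is log Calabi--Yau over $Z'$ with $B'$ effective, and the induced map $X'\to Z'$ satisfies $\rho(X'/Z')=1$. Since $B'$ meets the generic fiber of $X'\to Z'$ (the horizontal components of $B$ survive $\psi$), $K_{X'}$ is relatively anti-ample, so $X'\to Z'$ is a Mori fiber space. The main obstacle is verifying rigorously that the $R$-ray contraction is divisorial under the lc-center hypothesis; this likely requires either a direct local analysis of $(X,B)$ near $\phi^{-1}(C)$ or a reduction, via the canonical bundle formula, to the structure theorem on standard $\pp^{1}$-links proved later in the paper (Theorem~\ref{introthm:p1-link-char}).
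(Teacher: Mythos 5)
Your high-level plan (a two-ray game over $Z'$) is the same as the paper's, but the mechanism you propose has a genuine gap, and the hypothesis on log canonical centers is being used in the wrong place. The central problem is the claim that the second extremal ray $R$ is $K_X$-negative. You cannot ``pick $\gamma$ sufficiently general'': the ray $R$ is a fixed class, and the horizontal part of $B$ may meet $\phi^{-1}(C)$ only along curves spanning $R$ itself, or not at all. Concretely, let $Z'$ be a del Pezzo surface with an $A_1$ point, $B_{Z'}\sim -K_{Z'}$ avoiding that point, $Z\to Z'$ the minimal resolution with exceptional $(-2)$-curve $C$, and take $X=Z\times\pp^1$ with $B=Z\times\{0\}+Z\times\{\infty\}+\phi^*B_Z$. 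Then $C$ contains no log canonical center of $(Z,B_Z)$, $R$ is spanned by $C\times\{q\}$, and $K_X\cdot R=K_Z\cdot C=0$, $B\cdot R=B_Z\cdot C=0$: the ray is $K_X$-trivial, so there is no $K_X$-MMP step contracting it. Your fallback perturbation $K_X+B-\epsilon\phi^*H$ does make $R$ negative, but $B-\epsilon\phi^*H$ is not an effective boundary, so you have produced no log canonical pair for which $R$ is a negative extremal ray; without that, neither the contraction theorem nor the MMP machinery applies, and in the $K_X$-trivial example above some log structure is genuinely needed to contract $R$. Finally, your argument that the contraction must be divisorial (a small contraction would produce an lc center strictly contained in $C$) does not hold up: being flipped has nothing to do with being an lc center, and you yourself flag this step as the unresolved obstacle.

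The paper resolves all of this by perturbing in the other direction. Since $C$ contains no log canonical centers of $(Z,B_Z)$, the pair $(Z,B_Z+\epsilon C)$, and hence $(X,B+\delta\phi^*C)$, is log canonical for small $\epsilon,\delta>0$ --- this is the one and only place the hypothesis is used. Because $C^2<0$ on $Z$, one has $\phi^*C\cdot R<0$, so $K_X+B+\delta\phi^*C\sim_{\qq,Z'}\delta\phi^*C$ is negative on $R$ and $\phi^*C$ lies in the relative diminished base locus over $Z'$. Running the $(K_X+B+\delta\phi^*C)$-MMP over $Z'$ then contracts $\phi^*C$ (possibly after flips --- there is no need to show the first step is divisorial), terminating in a good minimal model $X'\to Z'$ with $\rho(X'/Z')=1$, which is a Mori fiber space for $K_{X'}$ since $B'$ is effective and horizontal. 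Your closing paragraph is fine once such a log canonical perturbation is in place, but as written the proof does not go through.
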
 

\begin{proof}
Note that $\rho(X/Z')=2$.
Observe that $(Z,B_Z+\epsilon C)$ is log canonical for $\epsilon>0$ small enough.
Hence, the pair $(X,B+\delta \phi^*C)$ is also log canonical for $\delta>0$ small enough.
The diminished base locus of $K_X+B+\epsilon \delta \phi^*C$ over $Z'$ contains $\phi^*C$.
Indeed, we have 
$K_X+B+\epsilon \delta \phi^*C\sim_\qq \delta \phi^*C$ and the latter is covered by curves that intersect $\phi^*C$ negatively
and map to points on $Z'$.
Hence, we may run a $(K_X+B+\delta \phi^*C)$-MMP over $Z'$
that terminates with a good minimal model
$X'\rightarrow Z'$
after contracting $\phi^*C$.
As $\rho(X'/Z')=1$, this good minimal model is a Mori fiber space for $K_{X'}$.
\end{proof}

\subsection{Torus actions} In this subsection, we prove lemmas regarding torus actions and birational contractions.

\begin{definition}
{\em 
Let $(X,B)$ be a log pair. 
We write ${\rm Aut}(X,B)$ for the {\em  group of automorphisms} of $(X,B)$, i.e., the group
that consists of automorphisms $\phi\colon X\rightarrow X$
for which $\phi^*B=B$.
Let $X\rightarrow Z$ be a contraction.
We write ${\rm Aut}_Z(X,B)$ for the group of automorphisms of $(X,B)$ over $Z$.
}
\end{definition}

\begin{lemma}\label{lem:descending-torus-actions}
Let $\phi\colon X\rightarrow Z$ be a projective morphism. 
Let $\mathbb{T}$ be an algebraic torus.
Let $(X,B)$ be a log canonical pair for which
$\mathbb{T} \leqslant {\rm Aut}_Z(X,B)$.
Let $\phi\colon X\dashrightarrow X'$ be a birational contraction and let $B':=\phi_*B$. 
Assume that $(X',B')$ is plt.
Then, we have $\mathbb{T}\leqslant {\rm Aut}_Z(X',B')$.
\end{lemma}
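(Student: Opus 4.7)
The plan is to descend the $\mathbb{T}$-action from $X$ to $X'$ via a $\mathbb{T}$-equivariant common resolution and an equivariant minimal model program, invoking the plt hypothesis on $(X',B')$ at the decisive step.

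First, by $\mathbb{T}$-equivariant resolution of singularities, choose a smooth $\mathbb{T}$-variety $W$ together with a $\mathbb{T}$-equivariant projective birational morphism $p\colon W\to X$ such that $q:=\phi\circ p\colon W\to X'$ is a morphism. Connectedness of $\mathbb{T}$ forces each individual $p$- and $q$-exceptional prime divisor on $W$ to be $\mathbb{T}$-invariant. Define $B_W$ by $K_W+B_W=p^*(K_X+B)$; it is $\mathbb{T}$-invariant. Let $F$ denote the sum, with coefficient one, of those $q$-exceptional prime divisors on $W$ that are not $p$-exceptional, equivalently, the birational transforms of the $\phi$-exceptional prime divisors on $X$. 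Then $D:=B_W^{\geq 0}+F$ is a $\mathbb{T}$-invariant effective divisor, and after passing to a further $\mathbb{T}$-equivariant log resolution we may assume $(W,D)$ is log canonical.

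Next, run a $(K_W+D)$-MMP over $X'$. Since $(W,D)$ is $\mathbb{T}$-invariant and $\mathbb{T}$ is connected, every contracted extremal ray is $\mathbb{T}$-invariant, each step is $\mathbb{T}$-equivariant, and every intermediate model inherits a $\mathbb{T}$-action. The MMP terminates with a relative good minimal model $q'\colon W'\to X'$ crepant birational to $(X',B')$. The plt hypothesis forces $q'$ to be small: any $q'$-exceptional prime divisor surviving the MMP would provide an exceptional lc place of $(X',B')$, contradicting $a(E,X',B')>0$ for all exceptional valuations required by plt. Hence $q'$ is an isomorphism in codimension one, and $W'$ inherits a regular $\mathbb{T}$-action.

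It remains to descend the action across $q'$. Let $U\subseteq X'$ be the open subset on which $q'$ is an isomorphism; its complement has codimension at least two and is $\mathbb{T}$-stable. Via ${q'}^{-1}(U)\cong U$, the $\mathbb{T}$-action on $W'$ transfers to a regular $\mathbb{T}$-action on $U$, which extends uniquely to a regular $\mathbb{T}$-action on all of $X'$ preserving $B'$ by normality of $X'$ combined with the codimension-two complement. Since every morphism in the construction was over $Z$, so is the resulting action, giving $\mathbb{T}\leq\Aut_Z(X',B')$. The main obstacle is the MMP step: the plt hypothesis is exactly what forces $q'$ to be small, allowing the action to be transferred cleanly without dealing with residual exceptional divisors; the equivariance of the MMP and the extension of torus actions across codimension-two loci on normal varieties are standard.
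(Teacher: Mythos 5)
Your overall architecture --- equivariant resolution, equivariant MMP that contracts all divisors lying over $X'$ (using the plt hypothesis), then transfer of the action across the residual small modification --- is the same as the paper's. However, the two genuinely non-formal steps are asserted with justifications that do not work as stated.

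First, the claim that the birational transforms of the $\phi$-exceptional prime divisors on $W$ are $\mathbb{T}$-invariant does not follow from connectedness of $\mathbb{T}$. There is no a priori reason that $t\cdot D$ is again $\phi$-exceptional when $D$ is: the contraction $X\dashrightarrow X'$ has nothing to do with the torus action, and a connected group can certainly move a divisor ($\mathbb{G}_m$ acting on $\pp^2$ moves a general line). The same issue already affects the construction of $W$: the indeterminacy locus of $\phi$ is not known to be $\mathbb{T}$-invariant, so one cannot simply produce an equivariant resolution on which $\phi\circ p$ becomes a morphism. The paper supplies the missing ingredient as its first step: the reduced $\phi$-exceptional divisor $E$ satisfies $\kappa(E)=0$ (rigidity of the exceptional divisors of a birational contraction), and since $t\cdot E\sim E$ for $t$ in a connected linear group, rigidity forces $\supp(t\cdot E)=\supp(E)$, hence $t\cdot E=E$. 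Only after this can one take a $\mathbb{T}$-equivariant log resolution of $(X,B+\supp(E))$ and know that all the relevant divisors are invariant.

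Second, the final descent is justified incorrectly. A regular $\mathbb{T}$-action on an open $U\subseteq X'$ with codimension-two complement does not extend to a regular action on $X'$ by normality: each $t$ only gives a birational self-map of $X'$ that is an isomorphism in codimension one, and such maps need not be automorphisms --- this is precisely the phenomenon of flops. (Normality extends functions, hence morphisms to affine targets, not automorphisms.) The paper closes this gap by showing that, since $(X',B')$ is plt, the small birational map from the MMP output to $X'$ is a composition of flops, each defined by a semiample divisor linearly equivalent to a $\mathbb{T}$-invariant one, hence each flop is equivariant. In your variant, where the MMP is run over $X'$ and yields an honest small projective morphism $q'\colon W'\to X'$, you could instead note that the connected group $\mathbb{T}$ acts trivially on relative numerical classes of curves, hence preserves the extremal face contracted by $q'$, so the contraction itself is equivariant and the action descends. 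Some argument of this kind is required; ``extend across codimension two'' is not one. (A minor additional point: $D=B_W^{\geq 0}+F$ may have coefficients exceeding one on transforms of $\phi$-exceptional components already appearing in $B$; take the maximum rather than the sum there.)
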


\begin{proof}
Let $E$ be the reduced exceptional locus of $\phi$.
Note that $\kappa(E)=0$ (see, e.g.,~\cite[Lemma 2.28]{Mor24a}). We argue that $E$ is $\mathbb{T}$-invariant. 
First, note that $t\cdot E \sim E$ for every $t\in \mathbb{T}$ as $\mathbb{T}$ is a connected group. As $\kappa(E)=0$, we get that $\supp(t\cdot E)=\supp(E)$ for every $t\in \mathbb{T}$.
Thus, $t\cdot E=E$ for every $t\in \mathbb{T}$.
Let $Y\rightarrow X$ be a $\mathbb{T}$-equivariant log resolution of $(X,B+\supp(E))$.
Let $\psi\colon Y\rightarrow X'$ be the associated birational contraction.
Note that every exceptional divisor of $\psi$ and the strict transform of $B$ on $Y$ are $\mathbb{T}$-invariant.
Let $E_Y$ be the reduced exceptional divisor of $\psi$ and $B_Y$ be the strict transform of $B$ on $Y$.
Then, we may run a $\mathbb{T}$-equivariant MMP for $K_Y+E_Y+B_Y$ over $Z$. By the plt condition of $(X',B')$ this MMP terminates with a model $Y'\rightarrow Z$ after contracting all the exceptional divisors of $\psi$.
The previous statement follows from Lemma~\ref{lem:contracting-degen}.
Let $B_{Y'}$ be the push-forward of $B_Y$ to $Y'$.
Then, we get $\mathbb{T}\leqslant {\rm Aut}_Z(Y',B_{Y'})$ and $Y'$ is small birational to $X'$ over $Z$.
As $(X',B')$ is plt, the map 
$Y'\dashrightarrow X'$ is a composition of $(K_{Y'}+B_{Y'})$-flops over $Z$.
Each such a flop is defined by a Cartier semiample divisor which is linearly equivalent to a $\mathbb{T}$-invariant Cartier divisor.
Thus, each such a flop is itself $\mathbb{T}$-equivariant. 
We conclude that $\mathbb{T}\leqslant {\rm Aut}_Z(X',B')$.
\end{proof}

\begin{lemma}\label{lem:canonical-places}
Let $X\rightarrow Z$ be a fibration
and $(X,B)$ be a plt pair.
Assume that $\mathbb{G}_m\leqslant {\rm Aut}_Z(X,B)$,
$\lfloor B\rfloor =S_0+S_\infty$, and
$\mathbb{G}_m$ acts as the identity on both
divisors $S_0$ and $S_\infty$.
Then, every exceptional canonical place of $(X,B)$ is $\mathbb{G}_m$-invariant.
\end{lemma}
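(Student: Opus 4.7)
The plan is to construct a $\mathbb{G}_m$-equivariant projective birational model of $X$ on which $E$ appears as a prime divisor; once this is achieved, the $\mathbb{G}_m$-invariance of $E$ will follow from the rigidity-and-connectedness argument from the proof of Lemma~\ref{lem:descending-torus-actions}. Namely, on any $\mathbb{G}_m$-equivariant birational model, each prime divisor $F$ satisfies $\kappa(F)=0$ and $t\cdot F \sim F$ for every $t\in\mathbb{G}_m$ (since $\mathbb{G}_m$ is connected), so that $\supp(t\cdot F)=\supp(F)$ and hence $t\cdot F=F$.

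First I would take a $\mathbb{G}_m$-equivariant log resolution $\mu\colon Y \to X$ of $(X,B)$, which exists by equivariant resolution of singularities. Writing $K_Y + B_Y = \mu^*(K_X+B)$, the pair $(Y,B_Y)$ is log smooth and crepant to $(X,B)$, and the plt hypothesis gives $\lfloor B_Y\rfloor = \mu^{-1}_*S_0 + \mu^{-1}_*S_\infty$. Both components of $\lfloor B_Y\rfloor$ are $\mathbb{G}_m$-fixed pointwise (since $\mu$ is an isomorphism over generic points of $S_0$ and $S_\infty$), and every exceptional prime divisor on $Y$ is $\mathbb{G}_m$-invariant by the rigidity-and-connectedness argument above. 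In particular, if $E$ already appears as a prime divisor on $Y$, we are done.

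Otherwise $c_Y(E)$ has codimension at least two on $Y$ and $a_E(Y,B_Y)=1$ by crepancy, and I would then show that $c_Y(E)$ is $\mathbb{G}_m$-invariant and that $E$ can be reached from $Y$ by a sequence of $\mathbb{G}_m$-equivariant blowups. At a general point $y$ of $c_Y(E)$, the $\mathbb{G}_m$-action on $Y$ can be linearized, so we may choose analytic-local coordinates $x_1,\dots,x_n$ at $y$ that are simultaneously semi-invariants for $\mathbb{G}_m$ and such that the components of $B_Y$ through $y$ are coordinate hyperplanes; the coordinates cutting out components of $\lfloor B_Y\rfloor$ must have weight zero, since $\mathbb{G}_m$ acts trivially on those components. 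A standard log-smooth computation shows that a divisorial valuation of log discrepancy one centered at $y$ is monomial in such coordinates, so $c_Y(E)$ is the vanishing locus of a subset of the $x_i$ and is $\mathbb{G}_m$-invariant. Iteratively blowing up such $\mathbb{G}_m$-invariant centers produces a $\mathbb{G}_m$-equivariant model $Y^\dagger \to Y$ on which $E$ is a prime divisor, and $\mathbb{G}_m$-invariance of $E$ then follows from the rigidity-and-connectedness argument.

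The main obstacle is the monomialization step for canonical places on an equivariant log smooth pair: justifying rigorously that a canonical place with log discrepancy one is monomial in semi-invariant coordinates, and that each successive center produced by the process remains $\mathbb{G}_m$-invariant. Here the hypothesis that $\mathbb{G}_m$ fixes $S_0$ and $S_\infty$ pointwise enters essentially, since it forces the semi-invariant coordinates cutting out $\lfloor B_Y\rfloor$ to have weight zero, which is precisely what guarantees that monomial canonical places are $\mathbb{G}_m$-invariant and allows the equivariant blowup sequence to terminate at a model realizing $E$.
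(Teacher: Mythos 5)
Your overall strategy (equivariant log resolution, then realize the canonical place by equivariant blow-ups and conclude by rigidity of exceptional divisors under a connected group) is the same as the paper's, but the key step you flag as the ``main obstacle'' is in fact false as stated, and this is a genuine gap. A canonical place of a log smooth pair need not be monomial in adapted coordinates, because its center need not be a stratum: on a $3$-fold $Y$ with $S_0\subset \lfloor B_Y\rfloor$ reduced, the blow-up of \emph{any} smooth curve $C\subset S_0$ (for instance $\{x=0,\ z=y^2\}$ inside $S_0=\{x=0\}$) produces a divisor of log discrepancy $2-1=1$, i.e.\ a canonical place whose center is not cut out by a subset of the coordinates. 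The weight-zero observation does not repair this: it neither forces such a center to be a coordinate subspace nor is it the mechanism by which invariance actually holds.

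The paper's proof avoids this by splitting the canonical places according to whether their center meets $S_0\cup S_\infty$. For centers contained in $S_0\cup S_\infty$ no monomiality is needed at all: since $\mathbb{G}_m$ acts as the identity on $S_0$ and $S_\infty$, \emph{every} subvariety of these divisors is automatically $\mathbb{G}_m$-invariant, and by \cite[Corollary 2.31]{KM92} the exceptional canonical valuations over a model that is terminal away from $S_0+S_\infty$ all arise from codimension-two centers inside $S_0$ or $S_\infty$. For centers away from $S_0\cup S_\infty$, the pair is sub-klt there, so by \cite[Proposition 2.36]{KM92} there are only finitely many canonical valuations, and these are extracted by recursively blowing up intersections of components of $B^{>0}$; those strata are $\mathbb{G}_m$-invariant because the divisor $B$ is preserved and $\mathbb{G}_m$ is connected. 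You would need to replace your monomialization claim by this dichotomy (or an equivalent argument) for the proof to go through.
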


\begin{proof}
First, we may replace $(X,B)$ with a $\mathbb{G}_m$-equivariant log resolution. By doing so, we replace $(X,B)$ with a sub-plt pair.
Note that $(X,B)$ is sub-klt in the complement
of $S_0+S_\infty$.
Thus, the pair $(X,B)$ has only finitely many canonical valuations on the complement of $S_0+S_\infty$ (see, e.g.,~\cite[Proposition 2.36]{KM92}).
Such canonical valuations can be extracted by recursively blowing-up the intersections of the irreducible components of intersections of divisors in $B^{>0}$.
Such irreducible components are $\mathbb{G}_m$-invariant.
Thus, after finitely many $\mathbb{G}_m$-equivariant blow-ups, we obtain a $\mathbb{G}_m$-equivariant model $(Y,B_Y)$ that is terminal in the complement
of $S_{Y,0}+S_{Y,\infty}$, the strict transform of $S_0+S_\infty$.
Note that $(Y,B_Y)$ is log smooth
and the only exceptional canonical valuations over $Y$
are obtained by blowing-up centers of codimension $2$ contained in either $S_{Y,0}$ or $S_{Y,\infty}$ (see~\cite[Corollary 2.31]{KM92}).
Since $\mathbb{G}_m$ acts as the identity on both $S_{Y,0}$ and $S_{Y,\infty}$,
we conclude that such centers are $\mathbb{G}_m$-invariant. Therefore, every exceptional canonical place of $(Y,B_Y)$ is $\mathbb{G}_m$-invariant.
So, every exceptional canonical place of $(X,B)$ is $\mathbb{G}_m$-invariant.
\end{proof}

\subsection{Standard $\pp^1$-links}
In this subsection, we recall the concepts of $\pp^1$-links and strict conic fibrations, and prove some lemmata regarding these objects.

\begin{definition}\label{def:p1-link}
{\em Let $\phi\colon (X,B)\rightarrow Z$ be a log Calabi--Yau fibration.
We say that $\phi$ is a {\em standard $\pp^1$-link} if the following conditions are satisfied:
\begin{enumerate}
\item the divisor $\lfloor B\rfloor$ consists of two prime divisors $S_0$ and $S_\infty$,
\item the restrictions $\phi|_{S_i}\colon S_i \rightarrow Z$ are isomorphisms for $i\in \{0,\infty\}$,  
\item the pair $(X,B)$ is plt, and
\item each reduced fiber of $\phi$ is isomorphic to $\pp^1$. 
\end{enumerate} 
}
\end{definition} 

\begin{lemma}\label{lem:P1-is-MFS}
Let $\phi\colon (X,B)\rightarrow Z$ be a standard $\pp^1$-link. Assume that $X$ is klt. Then $\phi$ is a Mori fiber space. 
\end{lemma}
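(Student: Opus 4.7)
The plan is to verify directly the two defining properties of a Mori fiber space: the relative Picard number satisfies $\rho(X/Z)=1$ and $-K_X$ is $\phi$-ample. Note that $-K_X$ is $\qq$-Cartier because $X$ is klt.

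First, I would compute $-K_X\cdot F$ on a general fiber $F\cong\pp^1$. Adjunction, together with the fact that the normal bundle of a general fiber has degree zero, yields $K_X\cdot F=\deg K_{\pp^1}=-2$, so $-K_X\cdot F=2>0$. Consistently, $(K_X+B)\cdot F=0$ and the two sections $S_0$, $S_\infty$ each contribute $1$ to $B\cdot F$, which forces the fractional part $\{B\}$ to restrict to zero on a general $F$, so $\{B\}$ is $\phi$-vertical.

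Second, I would argue $\rho(X/Z)=1$. The crucial input from the definition of a standard $\pp^1$-link is that each reduced fiber of $\phi$ is $\pp^1$; hence every scheme-theoretic fiber is a (possibly non-reduced) thickening of an irreducible $\pp^1$ and supports no irreducible curve other than that $\pp^1$. Every $\phi$-vertical irreducible curve is therefore a reduced fiber, and since fibers over different points of $Z$ are numerically equivalent as $1$-cycles, all such classes are proportional in $N_1(X/Z)_\qq$. Thus $N_1(X/Z)_\qq$ is one-dimensional, giving $\rho(X/Z)=1$. Combined with $-K_X\cdot F>0$, this forces $-K_X$ to be $\phi$-ample (the unique ray of the relative cone pairs positively with $-K_X$), so $\phi$ is a Mori fiber space.

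The main care point is the second step: one must rule out extra contracted curve classes arising from non-reduced fiber structure. This is handled by the observation that any $1$-cycle with support in a thickening of $\pp^1$ is, in rational coefficients, a multiple of the class of the underlying $\pp^1$, so in $N_1(X/Z)_\qq$ every contracted class lies on the single fiber ray.
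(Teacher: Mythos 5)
Your proof is correct, but it takes a genuinely different route from the paper's. The paper exploits the MMP: since $X$ is klt one can run a $K_X$-MMP over $Z$, and any divisorial or flipping contraction would have to contract (curves inside) fibers, each of which meets both of the disjoint sections $S_0$ and $S_\infty$; their images would then intersect, which is impossible, so the MMP performs no birational step and its Mori fiber space output is $\phi$ itself. You instead verify the definition of a Mori fiber space directly: every $\phi$-vertical irreducible curve is the reduction of a fiber, all such classes are proportional in $N_1(X/Z)_\qq$, hence $\rho(X/Z)=1$, and $-K_X\cdot F>0$ together with relative Kleiman gives $\phi$-ampleness. Your route is more elementary in that it avoids the termination/BCHM input, while the paper's is shorter given that the MMP machinery is used throughout the article anyway. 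The one step you should justify rather than assert is the proportionality of fiber classes: $\phi$ need not be flat, so "fibers over different points are numerically equivalent as $1$-cycles" is not automatic; the standard fix is to connect two points of $Z$ by an irreducible curve $T$, pass to the normalization of the surface $\phi^{-1}(T)_{\rm red}$ over the normalization of $T$, and use that fibers of a surface fibered over a smooth curve are algebraically equivalent, which after pushing forward yields exactly the proportionality (with positive ratios) that you need. Also note that $-K_X\cdot F>0$ follows already from $(K_X+B)\cdot F=0$ and $(S_0+S_\infty)\cdot F=2$, so the normal bundle computation is not needed.
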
 

\begin{proof}
Since $X$ is klt, we may run a $K_X$-MMP over $Z$.
Note that every reduced fiber of $\phi$ is isomorphic to $\pp^1$ and intersects both $S_0$ and $S_\infty$.
This follows by conditions (2) and (4) of Definition~\ref{def:p1-link}.
If $X\rightarrow X'$ is either a divisorial contraction or flipping contraction, then the images of $S_0$ and $S_\infty$ would intersect on $X'$, leading to a contradiction. 
We conclude that the $K_X$-MMP over $Z$ terminates with a Mori fiber space $X\rightarrow Z$. Thus $\phi$ is itself a Mori fiber space.
\end{proof}

\begin{lemma}\label{lem:MFS-two-disjoint-sect-is-p1}
Let $\phi\colon X\rightarrow Z$ be a Mori fiber space.
Let $(X,B)$ be a log Calabi--Yau pair over $Z$.
Assume that $(X,B)$ is plt and $\lfloor B\rfloor$ has two disjoint components $S_0$ and $S_\infty$ that are horizontal over $Z$.
Furthermore, assume that $S_0$ and $S_\infty$ are $\qq$-Cartier.
Then, the morphism $\phi\colon (X,B)\rightarrow Z$ is a standard $\pp^1$-link.
\end{lemma}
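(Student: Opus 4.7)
The plan is to verify conditions (1), (2), and (4) of Definition~\ref{def:p1-link}, since (3) is in the hypotheses.

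Since $\phi$ is a Mori fiber space, $\rho(X/Z)=1$, so any effective horizontal $\qq$-Cartier divisor on $X$ is $\phi$-ample; in particular $S_0$ and $S_\infty$ are both $\phi$-ample. Analyzing a general fiber $F$: it is a normal Fano variety on which $S_0|_F$ and $S_\infty|_F$ are two disjoint nonzero effective ample $\qq$-Cartier divisors. Two disjoint ample divisors can exist only on a variety of dimension at most one, so $F\cong \pp^1$. From $K_F+B|_F\sim_\qq 0$ on $\pp^1$, and since $S_0|_F$ and $S_\infty|_F$ each have degree at least one (they appear with coefficient one in $B$), we conclude $\deg S_0|_F=\deg S_\infty|_F=1$ and $B|_F=S_0|_F+S_\infty|_F$. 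In particular, $\phi|_{S_i}$ is generically of degree one, and $B$ has no further horizontal component.

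For condition~(1), an additional prime component $T\subseteq \lfloor B\rfloor$ would be vertical; by dimension reasons $T$ contains entire fibers of $\phi$ over a general point of $\phi(T)$. Such a fiber meets $S_0$ by $\phi$-ampleness, so $T\cap S_0\neq\emptyset$, contradicting the plt condition that forces $\lfloor B\rfloor$ to be a disjoint union of normal divisors. Hence $\lfloor B\rfloor=S_0+S_\infty$. For condition~(2), the morphism $\phi|_{S_0}$ is finite: a curve $C\subseteq S_0$ contracted by $\phi$ would satisfy $S_\infty\cdot C=0$ by disjointness, contradicting $\phi$-ampleness of $S_\infty$. Being proper, finite, and generically of degree one onto the normal variety $Z$, the morphism $\phi|_{S_0}$ is an isomorphism by Zariski's main theorem; the same applies to $\phi|_{S_\infty}$. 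As a byproduct, every fiber of $\phi$ is one-dimensional, since a higher-dimensional fiber would meet $\phi$-ample $S_0$ in something positive-dimensional, violating the finiteness just established.

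For condition~(4), fix any reduced fiber $F_0$. Since $(X,\{B\})$ is klt (plt of $(X,B)$ degenerates to klt when we keep only the fractional part) and $-(K_X+\{B\})\sim_{\qq,Z}S_0+S_\infty$ is $\phi$-ample, Kawamata--Viehweg vanishing gives $R^1\phi_*\mathcal{O}_X=0$. By cohomology and base change we deduce $H^1(F_0,\mathcal{O}_{F_0})=0$, hence $p_a(F_0)=0$. Since $\phi|_{S_0}$ is an isomorphism, $S_0\cap F_0$ is a single reduced point $p$, and similarly $S_\infty\cap F_0=\{q\}$ with $p\neq q$. By $\phi$-ampleness of $S_0$ and $S_\infty$, every irreducible component of $F_0$ contains both $p$ and $q$. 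If $F_0$ had two or more components, any two would share the two distinct points $p,q$, producing $p_a(F_0)\geq 1$, a contradiction. Therefore $F_0$ is irreducible, and an irreducible reduced connected projective curve of arithmetic genus zero is isomorphic to $\pp^1$.

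The main obstacle is condition~(4): while (1) and (2) follow from standard Picard-rank-one arguments together with Zariski's main theorem, controlling arbitrary reduced fibers requires the interplay between Kawamata--Viehweg vanishing (to pin down the arithmetic genus) and the combinatorial constraint imposed by the two disjoint ample sections (to force irreducibility).
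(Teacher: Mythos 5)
Your proposal is correct and follows essentially the same route as the paper: disjointness of the two $\phi$-ample divisors forces one-dimensional fibers and rules out further components of $\lfloor B\rfloor$, finiteness plus birationality of $\phi|_{S_i}$ gives the isomorphisms onto $Z$, and relative Kawamata--Viehweg vanishing combined with the two disjoint sections pins down the reduced fibers as $\pp^1$. The only (minor) divergence is in condition (2), where you compute the generic degree of $\phi|_{S_i}$ by adjunction on a general fiber and invoke Zariski's main theorem, whereas the paper deduces connectedness of the fibers of $\phi|_{S_i}$ from the plt Fano structure of $(X,B-S_\infty)$ over $Z$; both arguments are sound.
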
 

\begin{proof}
Note that $S_\infty$ is ample over $Z$.
Hence, all the fibers of $\phi$ are one-dimensional. Otherwise, $S_0$ would intersect $S_\infty$.
In particular, we conclude that $\lfloor B\rfloor$ consists precisely of $S_0$ and $S_\infty$.
Indeed, any other component of $\lfloor B\rfloor$ would be vertical over $Z$ and thus intersect $S_\infty$, violating the plt condition.
This implies condition (1) of Definition~\ref{def:p1-link}.
Since $(X,B)$ is plt, both divisors $S_0$ and $S_\infty$ are normal.
In particular, $(X,B-S_0)$ is plt Fano over $Z$,
so the morphism $\phi|_{S_0}\colon S_0\rightarrow Z$ has connected fibers.
If $\phi|_{S_0}$ contracts a curve, then $S_\infty$ would intersect such curve, contradicting the fact that $S_0$ and $S_\infty$ are disjoint.
Thus, we get that $\phi|_{S_0}\colon S_0\rightarrow S_\infty$ is a bijection between normal varieties, and so it is an isomorphism.
Anlogously, we conclude that $\phi|_{S_\infty}\colon S_\infty\rightarrow Z$ is an isomorphism. This proves condition (2) of Definition~\ref{def:p1-link}.

Finally, it suffices to show that condition (4) of Definition~\ref{def:p1-link} is satisfied.
For this regard, note that $(X,B-\epsilon(S_0+S_\infty))$ is klt and Fano over $Z$.
Thus, by the relative version of Kawamata-Viehweg vanishing, we get $R^1\phi_*\mathcal{O}_X=0$.
In particular, for every fiber $X_z$, we have $H^1(X_z,\mathcal{O}_{X_z})=0$.
Thus, each reduced fiber is isomorphic to a tree of smooth rational curves.
As both $S_0$ and $S_\infty$ are ample over $Z$,
disjoint, and intersect each fiber at a single point, we conclude that each reduced fiber is isomorphic to $\pp^1$.
\end{proof}

\begin{definition}
{\em 
Let $\phi\colon X\rightarrow Z$ be a fibration.
We say that $\phi\colon (X,B)\rightarrow Z$ is a {\em strict conic fibration}
if the following conditions are satisfied:
\begin{enumerate}
\item the morphism $\phi$ is a conic fibration with $\rho(X/Z)=1$,
\item the pair $(X,B)$ is log Calabi--Yau over $Z$,
\item the divisor $\lfloor B\rfloor$ has two prime components $S_0$ and $S_\infty$ that are horizontal over $Z$, and 
\item the prime divisors $S_0$ and $S_\infty$ are disjoint.
\end{enumerate} 
}
\end{definition} 

\begin{lemma}\label{lem:strict-conic-bundle-is-p1-link}
Let $\phi \colon (X,B)\rightarrow Z$ be a strict conic fibration. 
Let $(Z,B_Z)$ be the log pair induced by the canonical bundle formula.
Assume that $(Z,B_Z)$ is $\qq$-factorial and dlt.
Then, there exists $0\leq \Gamma\leq B$ such that 
$\phi \colon (X,\Gamma)\rightarrow Z$ is a standard $\pp^1$-link.
\end{lemma}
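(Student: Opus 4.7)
The plan is to write $B=S_0+S_\infty+B^v$ with $B^v$ vertical, construct $\Gamma\le B$ by reducing the coefficient-$1$ vertical components below $1$ using pullbacks from $\lfloor B_Z\rfloor$, and finish by applying Lemma~\ref{lem:MFS-two-disjoint-sect-is-p1}. The first step is to pin down the horizontal part of $B$: because $\phi$ is a conic fibration, a general fiber $F\cong\pp^1$ satisfies $-K_X\cdot F=2$, and by the log Calabi--Yau condition $B\cdot F=2$. Each of the disjoint sections $S_0,S_\infty$ meets $F$ transversally at one point with coefficient one, contributing the full degree, so $B$ has no other horizontal component and we may write $B=S_0+S_\infty+B^v$ with $B^v$ effective and vertical over $Z$.

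Next I construct $\Gamma$ as
\[
\Gamma := B - \sum_{P\in\lfloor B_Z\rfloor} \epsilon_P\,\phi^*P
\]
for small positive rationals $\epsilon_P$, using that $\phi^*P$ is $\qq$-Cartier by $\qq$-factoriality of $Z$. Since each $\phi^*P\sim_{\qq,Z}0$, the pair $(X,\Gamma)$ is log Calabi--Yau over $Z$. For each $P\in\lfloor B_Z\rfloor$ the canonical bundle formula forces some component of $\phi^{-1}(P)$ to have coefficient $1$ in $B$, so for $\epsilon_P$ small and positive the subtraction strictly reduces that coefficient below $1$, yielding $\lfloor\Gamma\rfloor=S_0+S_\infty$. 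The delicate issue is effectivity of $\Gamma$: this requires that every component of $\phi^{-1}(P)$ for $P\in\lfloor B_Z\rfloor$ already lies in $B$ with positive coefficient. I expect to establish this from the dlt hypothesis on $(Z,B_Z)$ by a local analysis at the generic point of $P$, combining the canonical bundle formula with adjunction on a log canonical place of $(X,B)$ dominating $P$; an ``unused'' component $V\subset\phi^{-1}(P)$ would contradict the snc-like dlt structure of $(Z,B_Z)$ at the generic point of $P$. This is the main obstacle of the proof.

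Finally I verify the four conditions of a standard $\pp^1$-link for $\phi\colon(X,\Gamma)\to Z$. Since $\Gamma\le B$ the pair is log canonical, and $\lfloor\Gamma\rfloor=S_0+S_\infty$ consists of two disjoint horizontal prime divisors. The contraction $\phi$ is a Mori fiber space because $\rho(X/Z)=1$ and $-K_X$ is $\phi$-positive on the general fiber; moreover $S_0,S_\infty$ are $\qq$-Cartier since in $N^1(X/Z)\cong\rr$ each is a $\qq$-multiple of $-K_X$ modulo a pullback from the $\qq$-factorial base $Z$. The pair $(X,\Gamma)$ is plt because the only coefficient-$1$ components of $\Gamma$ are the disjoint $S_0,S_\infty$, and the strict reduction of vertical coefficients destroys any codimension-$\ge 2$ log canonical center of $(X,B)$ that lay over the vertical direction. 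With these in hand, Lemma~\ref{lem:MFS-two-disjoint-sect-is-p1} concludes that $\phi\colon(X,\Gamma)\to Z$ is a standard $\pp^1$-link.
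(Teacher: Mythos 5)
Your strategy coincides with the paper's: the proof there simply sets $\Gamma:=B-\epsilon\,\phi^*\lfloor B_Z\rfloor$ for $0<\epsilon\ll 1$, observes that $(X,\Gamma)$ is plt with $\lfloor\Gamma\rfloor$ consisting of the two disjoint horizontal components, and invokes Lemma~\ref{lem:MFS-two-disjoint-sect-is-p1}. So your decomposition $B=S_0+S_\infty+B^v$ via the degree count on a general fiber, your definition of $\Gamma$, and your final appeal to Lemma~\ref{lem:MFS-two-disjoint-sect-is-p1} all match the paper. The step you single out as ``the main obstacle'' --- effectivity of $\Gamma$, equivalently $\supp\phi^*\lfloor B_Z\rfloor\subseteq\supp B$ --- is indeed the one non-formal point, and you leave it unproved; for what it is worth, the paper's own four-line proof does not address it either (compare Lemma~\ref{lem:going-down}, where the analogous containment is taken as a hypothesis). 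The relevant mechanism is that $\coeff_P(B_Z)=1-t_P$, where $t_P$ is the log canonical threshold of $\phi^*P$ with respect to $(X,B)$ over the generic point of $P$; thus $P\in\lfloor B_Z\rfloor$ forces an lc center of $(X,B)$ dominating $P$ and contained in $\supp\phi^*P$, and one must still upgrade this to positivity of the coefficient of \emph{every} component of $\phi^*P$, using $\rho(X/Z)=1$ and the two disjoint sections to control the fiber over the generic point of $P$. Your proposal does not carry this out.

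Two further points. First, your plt verification is where the dlt hypothesis on $(Z,B_Z)$ must actually enter, and you do not use it there: subtracting $\epsilon\phi^*\lfloor B_Z\rfloor$ only raises the log discrepancies of places whose centers lie inside $\phi^{-1}(\lfloor B_Z\rfloor)$. A codimension-two lc center of $(X,B)$ lying over a divisor not contained in $\lfloor B_Z\rfloor$, or over a deeper stratum, would survive in $(X,\Gamma)$ and destroy plt-ness. What excludes such centers is that for a crepant log Calabi--Yau fibration the images of lc centers of $(X,B)$ are lc centers of $(Z,B_Z)$, and dlt-ness of $(Z,B_Z)$ places all of its proper lc centers inside $\lfloor B_Z\rfloor$; your phrase about destroying any log canonical center ``over the vertical direction'' silently assumes exactly this. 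Second, your justification that $S_0$ and $S_\infty$ are $\qq$-Cartier is not valid: being numerically equivalent over $Z$ to a multiple of $-K_X$ plus a pullback does not make a Weil divisor $\qq$-Cartier, since $\qq$-Cartierness is not a numerical condition; this hypothesis of Lemma~\ref{lem:MFS-two-disjoint-sect-is-p1} requires either a $\qq$-factoriality assumption on $X$ or a separate argument.
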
 

\begin{proof}
Note that for $\epsilon>0$ small enough, the pair $(X,B-\epsilon \phi^*\lfloor B_Z\rfloor)$ is plt.
Set $\Gamma:=B-\epsilon \phi^*\lfloor B_Z\rfloor$.
Note that $\lfloor \Gamma\rfloor$ has two disjoint components that are horizontal over $Z$.
Then, we can apply Lemma~\ref{lem:MFS-two-disjoint-sect-is-p1} to conclude that $\phi\colon (X,\Gamma)\rightarrow Z$ is a standard $\pp^1$-link.
\end{proof} 

\section{\texorpdfstring{Geometry of standard $\pp^1$-links}{Geometry of standard P1-links}}

In this section, we will prove Theorem~\ref{introthm:p1-link-char} that gives a geometric characterization of standard $\pp^1$-links.
First, we prove two lemmata.
The following lemma is proved by Koll\'ar in the last paragraph of the proof of~\cite[Proposition 4.37]{Kol13}.
It states that a $\qq$-factorial standard $\pp^1$-link is locally the finite quotients of a product with $(\pp^1,\Sigma^1)$.

\begin{lemma}\label{lem:local-geometry-P1-link}
Let $X$ be a $\qq$-factorial variety.
Let $\phi\colon (X,B)\rightarrow Z$ be a standard $\pp^1$-link. Let $(Z,B_Z)$ be the log pair induced by the canonical bundle formula.
For any closed point $z\in Z$, there is an affine neighborhood $U$ of $z$ in $Z$
satisfying the following.
Let $(U,B_U)$ be the restriction of $(Z,B_Z)$ to $U$ and $(X_U,B_{X_U})$ be the restriction of 
$(X,B)$ to the preimage $X_U$ of $U$ via $\phi$. 
Then, there is a crepant finite surjective morphism $f\colon (V,B_V)\rightarrow (U,B_U)$
making the following diagram commutative:
\[
\xymatrix{ 
(X_U,B_U)\ar[d]_-{\phi_U} & (V,B_V) \times (\pp^1,\Sigma^1)\ar[d]^-{\pi_1}\ar[l]_-{f_X} \\ 
(U,B_U) & (V,B_V) \ar[l]^-{f} 
}
\]
where $f_X$ is a crepant finite surjective morphism, 
$\phi_U$ is the restriction of $\phi$ to $X_U$, and $\pi_1$ is the projection to the first component.
\end{lemma}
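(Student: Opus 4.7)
The plan is to follow the argument in the last paragraph of the proof of~\cite[Proposition 4.37]{Kol13}, which the statement explicitly cites. The strategy is to build the finite cover $V\to U$ in two steps: first a cyclic cover that trivializes the Cartier index of $S_0$ relative to $Z$, and then, if necessary, a further cover that trivializes a residual line bundle so that the total space becomes a product with $\pp^1$.

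First I would shrink $Z$ to an affine neighborhood $U$ of $z$. Since $X$ is $\qq$-factorial and $\phi|_{S_0}\colon S_0\to Z$ is an isomorphism, there is a minimal positive integer $m$ for which $mS_0|_U$ is Cartier and, after further shrinking $U$, linearly trivial over $U$. The relation $K_X+B\sim_{\qq,Z} 0$ combined with the canonical bundle formula applied to $\phi\colon(X,S_0+S_\infty)\to Z$ forces the branch locus of any cyclic cover peeling off this $\qq$-torsion class to lie inside $\supp\lfloor B_U\rfloor$. Concretely, the contribution $(1-1/m)D$ to $B_Z$ along a multiple fiber over a prime divisor $D\subseteq Z$ is exactly what Riemann--Hurwitz will cancel when we pull back along the degree-$m$ cyclic cover.

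Next I would define $V_0\to U$ as the cyclic degree-$m$ cover associated to an $m$-th root of a local generator of $\mathcal{O}_U(mS_0)$. On the base change $(X_{V_0},B_{V_0})\to V_0$, Lemma~\ref{lem:MFS-two-disjoint-sect-is-p1} (applied to the two pullback sections, which remain disjoint, horizontal, plt, and $\qq$-Cartier, together with the pullback Mori fiber space from Lemma~\ref{lem:P1-is-MFS}) confirms that we still have a standard $\pp^1$-link. Now $S_{V_0,0}$ is Cartier, and having two disjoint Cartier sections means that the complement $X_{V_0}\setminus S_{V_0,\infty}$ is a line bundle $L$ over $V_0$ with zero section $S_{V_0,0}$. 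After shrinking $V_0$ further, or composing with a trivializing cyclic cover $V\to V_0$, the line bundle $L$ becomes trivial, yielding $X_V\cong V\times\pp^1$ with $S_{V,0},S_{V,\infty}$ identified with the $0$ and $\infty$ sections. Hence $(X_V,B_V)\cong(V,B_V)\times(\pp^1,\Sigma^1)$.

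Crepancy of $f\colon(V,B_V)\to(U,B_U)$ is the Riemann--Hurwitz identity, using that ramification occurs only along $\lfloor B_U\rfloor$ with the prescribed multiplicities dictated by the canonical bundle formula. Crepancy of $f_X$ then follows because both $(X_U,B_{X_U})$ and its base change $(V,B_V)\times(\pp^1,\Sigma^1)$ have trivial log canonical class over $U$ and $V$ respectively, and the horizontal boundary $S_0+S_\infty$ pulls back to $V\times\{0,\infty\}=V\times\Sigma^1$. The main obstacle I expect is the bookkeeping: ensuring that the two cyclic covers (for the Cartier index of $S_0$ and for the residual line bundle) can both be realized over a single affine $U$, and that the resulting coefficients of $B_V$ simultaneously make both $f$ and $f_X$ crepant. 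This is handled precisely by applying the canonical bundle formula for $\phi$ pair-by-pair after each step of the construction.
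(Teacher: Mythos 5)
The paper itself gives no proof of this lemma: it is stated with an attribution to the last paragraph of the proof of \cite[Proposition 4.37]{Kol13}, so the real comparison is with Koll\'ar's argument, which your proposal tries to reconstruct. Your overall strategy (a finite cover trivializing the $\pp^1$-fibration, with Riemann--Hurwitz and log pull-back giving crepancy of $f$ and $f_X$) is the right one, but your key step is not well-defined. You build $V_0\to U$ as ``the cyclic degree-$m$ cover associated to an $m$-th root of a local generator of $\mathcal{O}_U(mS_0)$.'' This does not parse: $S_0$ is a divisor on $X$, not on $U$, and even read on $X_U$ the divisor $mS_0$ has degree $m>0$ on every fiber of $\phi$, so it is never linearly trivial over $U$ and admits no trivializing section from which to extract a root; nor is its class torsion. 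The class that actually becomes trivial over a neighborhood of $z$ is $\mathcal{O}_X(m(S_0-S_\infty))$: it is Cartier for suitable $m$ by $\qq$-factoriality, has fiber degree zero, restricts trivially to $X_z\simeq\pp^1$, and is therefore trivial over a neighborhood of $z$ because $R^1\phi_*\mathcal{O}_X=0$. Koll\'ar takes the associated quasi-\'etale cyclic cover of the \emph{total space} $X_U$ (not of the base), uses the trivialization of the pulled-back class to produce a degree-one map to $\pp^1$ on each fiber, and obtains $V$ as the Stein factorization over $U$.

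A second, related gap: even granting some finite cover $V_0\to U$ of the base, your assertion that the pullback of $S_0$ becomes Cartier on the normalized fiber product (so that $X_{V_0}\setminus S_{V_0,\infty}$ is a line bundle) is unjustified. That the singularities of $X$ along the section are undone by a finite base change is essentially the quotient-of-a-product statement you are trying to prove, so as set up the argument is circular at this point. (There is also a small slip in locating the branch locus inside $\supp\lfloor B_U\rfloor$: multiple fibers contribute coefficients $1-\tfrac{1}{m}<1$ to $B_Z$, so the relevant branching happens over $\supp B_U$, not over its round-down.) Both problems are repaired by working upstairs with $S_0-S_\infty$ as above; after that, your concluding steps --- Fujita's lemma to recognize the split $\pp^1$-bundle and Riemann--Hurwitz for the crepancy of $f$ and $f_X$ --- go through.
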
 

Conversely, we show that a log Calabi--Yau fibration which is a finite quotient of a product with $(\pp^1,\Sigma^1)$
is indeed a standard $\pp^1$-link.
In the converse statement, we do not require $X$ to be $\qq$-factorial.

\begin{lemma}
\label{lemma:quotient-of-product-is-p1-link}
Let $\phi\colon (X,B)\rightarrow (Z,B_Z)$ be a crepant log Calabi--Yau fibration
with $(Z,B_Z)$ klt.
Assume there exists a commutative diagram of log pairs:
\[
\xymatrix{ 
(X,B)\ar[d]_-{\phi} & (V,B_V)\times (\pp^1,\Sigma^1)\ar[d]^-{\pi_1} \ar[l]_-{f_X} \\
(Z,B_Z) & (V,B_V)\ar[l]^-{f} 
}
\]
where the horizontal arrows are crepant finite surjective morphisms and $\pi_1$ is the projection to the first component.
Then $\phi$ is a standard $\pp^1$-link.
\end{lemma}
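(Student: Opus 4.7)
The plan is to verify the four defining conditions of a standard $\pp^1$-link in Definition~\ref{def:p1-link} one by one. I write $B':=B_V\times \pp^1 + V\times \Sigma^1$ for the boundary on the product pair.

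First I would establish the plt condition~(3). Since $(Z,B_Z)$ is klt and $f\colon (V,B_V)\to (Z,B_Z)$ is crepant finite, the pair $(V,B_V)$ is also klt, so $\lfloor B_V\rfloor=0$. The product pair $(V\times \pp^1, B')$ is then plt with floor $V\times\{0\}+V\times\{\infty\}$, a disjoint union of two prime sections of $\pi_1$. Because plt-ness is preserved in both directions under crepant finite morphisms, $(X,B)$ is plt.

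Next I would verify conditions~(1) and~(2). The crepant finite correspondence of log canonical places shows that $\lfloor B\rfloor$ is supported on $S_0:=f_X(V\times\{0\})$ and $S_\infty:=f_X(V\times\{\infty\})$. From the commutative diagram, $\phi(S_i)=(f\circ \pi_1)(V\times\{i\})=Z$, so each $S_i$ is horizontal over $Z$. Assuming $S_0\neq S_\infty$ as prime divisors (the delicate point discussed below), the plt condition on $(X,B)$ rules out a codimension-two intersection, so $S_0\cap S_\infty=\emptyset$ and we obtain condition~(1). For condition~(2), the diagram gives $\phi|_{S_0}\circ f_X|_{V\times\{0\}}=f$, and a degree count using the normality of $S_0$ (plt floors are normal) together with the normality of $Z$ shows that $\phi|_{S_0}\colon S_0\to Z$ is an isomorphism; the argument for $S_\infty$ is symmetric.

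Finally, for condition~(4) I would analyze the fibers. For any $z\in Z$, the preimage $f_X^{-1}(X_z)=\pi_1^{-1}(f^{-1}(z))$ is a finite disjoint union of copies of $\pp^1$ dominating $X_z$ via the finite morphism $f_X$. Hence $X_z$ has pure dimension one, and for general $z$ it is an integral normal rational curve, so $X_z\cong \pp^1$. For an arbitrary $z$, I would mimic the vanishing argument from the proof of Lemma~\ref{lem:MFS-two-disjoint-sect-is-p1}: the pair $(X, B-\epsilon(S_0+S_\infty))$ is klt and Fano over $Z$ for small $\epsilon>0$, so relative Kawamata--Viehweg gives $R^1\phi_*\oo_X=0$, and each reduced fiber must be a tree of smooth rational curves; the presence of the two disjoint sections $S_0$ and $S_\infty$ pins each reduced fiber down to a single $\pp^1$.

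The most delicate step is showing that $S_0\neq S_\infty$ as prime divisors on $X$: the bare crepant finite correspondence permits the possibility that $f_X$ identifies the two marked sections of the product, and ruling this out requires combining the explicit shape of the boundary $B'$ with the crepant fibration structure and the plt condition on $(X,B)$. Once the distinctness is established, the remaining three conditions follow from the plt structure, the commutative diagram, and the standard relative vanishing argument.
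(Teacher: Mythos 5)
Your outline tracks the paper's proof closely (plt by crepant finite descent, $S_0,S_\infty$ as images of the two marked sections, relative Kawamata--Viehweg for the fibers), but there is a concrete gap in the middle that the paper spends real effort on and you skip entirely: you never establish that $S_0$ and $S_\infty$ are $\qq$-Cartier and ample over $Z$. The lemma deliberately does not assume $X$ is $\qq$-factorial (the paper flags this just before the statement), so $\qq$-Cartierness of the sections is not free; the paper obtains it by passing to the Galois closure $\overline{X}$ of $f_X$, pulling back the Cartier divisor $V\times\{0\}$ to a $G$-invariant Cartier divisor, and descending, and then gets relative ampleness from the projection formula. Without this, the pair $(X,B-\epsilon(S_0+S_\infty))$ you invoke need not even be a pair, let alone klt and Fano over $Z$, so the vanishing $R^1\phi_*\oo_X=0$ is unjustified as written; and your final claim that ``two disjoint sections pin each reduced fiber down to a single $\pp^1$'' fails for a general tree of rational curves unless every component of every fiber meets both sections, which is exactly what $\phi$-ampleness supplies in the paper. (Both of these particular steps admit workarounds from your own setup --- $R^1\phi_*\oo_X$ is a direct summand of $R^1(f\circ\pi_1)_*\oo_{V\times\pp^1}=0$ by the trace splitting, and every component of every fiber is the image of some $\{v\}\times\pp^1$ and hence meets both sections --- but you make neither argument, and the $\qq$-Cartierness is still needed elsewhere, e.g.\ for the plt Fano pair $(X,B-S_0)$ in the paper's proof of condition (2).) Your degree-count route to condition (2) is vague but salvageable: degree one on the general fiber, finiteness from the commutative diagram, then Zariski's main theorem; the paper instead uses connectedness of fibers of $\phi|_{S_\infty}$ for the plt Fano pair $(X,B-S_0)$.

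On the point you call most delicate --- that $f_X$ might identify the two sections, so that $\lfloor B\rfloor$ has only one prime component --- you are right that the stated hypotheses do not obviously exclude this, and you leave it unresolved. For what it is worth, the paper's proof also passes over it silently: it asserts via Riemann--Hurwitz that $\lfloor B\rfloor$ consists of the images of $V\times\{0\}$ and $V\times\{\infty\}$, which pins down the coefficients but not the distinctness of the images. In the one place the lemma is applied, the cover comes from Koll\'ar's local description of standard $\pp^1$-links, where the covering group preserves each section, so distinctness holds there; but as a freestanding proof yours is incomplete at this step, and you should either add distinctness as a hypothesis or prove it.
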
 

\begin{proof}
Since $(Z,B_Z)$ is klt
and $f$ is crepant, we conclude that
$(V,B_V)$ is klt.
In particular, the pair
$(V,B_V)\times (\pp^1,\Sigma^1)$ is plt.
Since $f_X$ is crepant and finite, we have that $(X,B)$ is plt. 
This gives condition (3) of Definition~\ref{def:p1-link}.
Furthermore, by Riemann-Hurwitz we conclude that $\lfloor B\rfloor$ contains precisely two prime divisors $S_0$ and $S_\infty$, namely the images
of $V\times\{0\}$ and $V\times \{\infty\}$, respectively.
This proves condition (1) of Definition~\ref{def:p1-link}.
By~\cite[Proposition 5.51]{KM92}, we know that $S_0$ and $S_\infty$ are normal varieties
that are disjoint. 
We argue that $S_0$ and $S_\infty$ are $\qq$-Cartier divisors.
Let $\bar{X}\rightarrow X$ be the Galois closure of $f_X$, so we have a commutative diagram:
\[
\xymatrix{
 & (\overline{X},\overline{B})\ar[ld]_-{f_{\overline{X}}} \ar[d]^-{h} \\ 
(X,B) & (V,B_V)\times (\pp^1,\Sigma^1) \ar[l]^-{f_X}
}
\]
of crepant finite surjective morphisms.
In the previous diagram, 
the morphisms
$h$ and $f_{\overline{X}}$ are crepant Galois morphisms and $(\overline{X},\overline{B})$ is a plt sub-pair.
Let $G$ be the Galois group associated with the finite Galois morphism $f_{\overline{X}}$.
Since $V\times \{0\}$ is a Cartier divisors, 
we have that 
$h^*(V\times \{0\})$ is a Cartier divisor on $\overline{X}$.
In particular, $h^*(V\times \{0\})$ is a $G$-invariant Cartier divisor.
Hence, the divisor $S_0$ is $\qq$-Cartier.
Analogously, the divisor $S_\infty$ is $\qq$-Cartier. 

Both $S_0$ and $S_\infty$ are ample over $Z$. Indeed, this follows from the projection formula
and the fact that both $V\times \{0\}$ and 
$V\times \{\infty\}$ are ample over $V$. 
Therefore, the pair $(X,B-S_0)$ is plt and Fano over $Z$.
In particular, the morphism $\phi|_{S_\infty}\colon S_\infty \rightarrow Z$ has connected fibers.
If $\phi|_{S_\infty}\colon S_\infty \rightarrow Z$ contracts a curve, then $S_0$ must intersect such a curve, contradicting the fact that $S_0$ and $S_\infty$ are disjoint.
We conclude that  $\phi|_{S_\infty}\colon S_\infty \rightarrow Z$ is a bijection between normal varieties and so it is an isomorphism.
Analogously, we conclude that $\phi|_{S_0}\colon S_0\rightarrow Z$ is also an isomorphism.
This gives condition (2) of Definition~\ref{def:p1-link}.

Note that all fibers of $\phi$ are curves, otherwise, $S_0$ and $S_\infty$ would intersect as they would induce ample divisors on a positive-dimensional variety.
Applying the relative version of Kawamata-Viehweg vanishing theorem to the klt pair $(X,B-\epsilon_1S_1-\epsilon_2S_2)$,
which is klt Fano over $Z$, we get 
$R^1\phi_*\mathcal{O}_X=0$.
Thus, for every closed point $z\in Z$, the fiber $X_z$ satisfies that 
$H^1(X_z,\mathcal{O}_{X_z})=0$.
Thus, every fiber $X_z$ is a tree of smooth rational curves.
Since both $S_0$ and $S_\infty$ can only intersect each fiber $X_z$ at a single point, 
and they are both ample over $Z$, 
we conclude that each reduced fiber is a rational curve. This gives condition (4) of Definition~\ref{def:p1-link}. 
\end{proof}

Now, we turn to give a proof of the geometric characterization of $\pp^1$-links.

\begin{proof}[Proof of Theorem~\ref{introthm:p1-link-char}]
We set $n:=\dim X$.
We will prove the statement in six steps.
In the first three steps, we perform birational modifications to produce a rank $2$ split vector bundle over a suitable resolution of $Z$.
In the fourth step, we use this vector bundle to endow $X$ with a torus action. In the fifth step, we use the torus action on $X$ to prove the statement in the $\qq$-factorial case. In the last step, we deal with the non-$\qq$-factorial case.\\

\textit{Step 1:} In this step, we produce a $\pp^1$-link on a resolution of singularities of $Z$.\\

Let $S_0$ and $S_\infty$ be the two components of $\lfloor B\rfloor$ that dominate $Z$.
We denote by $(Z,B_Z)$ be the klt pair induced by the canonical bundle formula on $Z$.
Let $Z'\rightarrow Z$ be a log resolution of $(Z,B_Z)$.
Let $X'$ be the normalization of the fiber product $Z'\times_Z X$.
Let $(X',B')$ be the log pull-back of $(X,B)$ to $X'$.
We denote by $S'_0$ and $S'_\infty$ the pull-backs of $S_0$ and $S_\infty$ to $X'$, respectively.
Note that $B'$ may not be an effective divisor, however, all its components with negative coefficients are vertical over $Z'$.
Let $U$ be an open affine in $Z$.
Let $(U,B_U)$ be the restriction of $(Z,B_Z)$ to $U$.
Let $U'$ be the preimage of $U$ in $Z'$
and let $(U',B_{U'})$ be the log pull-back of $(U,B_U)$ to $U'$.
Let $(X_U,B_{X_U})$ be the restriction of 
$(X,B)$ to the preimage of $U$ in $X$.
We denote by $\phi_U\colon X_U\rightarrow U$ the restriction of $\phi$ to $X_U$.
By Lemma~\ref{lem:local-geometry-P1-link}, up to shrinking $U$, we have a commutative diagram:
\[
\xymatrix{ 
(X_U,B_U)\ar[d]_-{\phi_U} & (V,B_V) \times (\pp^1,\Sigma^1)\ar[d]^-{\pi_1}\ar[l]_-{f_X} \\ 
(U,B_U) & (V,B_V) \ar[l]^-{f} 
}
\]
where $f$ and $f_X$ are crepant finite surjective morphism and $\pi_1$ is the projection to the first component.
Let $X'_U$ be the normalization
of $X_U\times_U U'$ 
and let $(X'_U,B_{X'_U})$ be the log pull-back of
$(X_U,B_U)$ to $X'_U$.
Let $V'$ be the normalization of $V\times_U U'$ and let $(V',B_{V'})$ be the log pull-back of $(U,B_U)$ to $V'$.
By the universal property of fiber products, we obtain a commutative diagram of crepant morphisms:
\begin{equation}\label{eq:comm-diag-product}
\xymatrix{
(X_{U}',B_{X_{U}'}) \ar[d]_-{\phi_{U'}} & (V',B_{V'})\times 
(\pp^1,\Sigma^1) \ar[l]_-{g} \ar[d]^-{\pi_1} \\ 
(U',B_{U'}) & (V',B_{V'})\ar[l]^-{h} 
}
\end{equation} 
where $\pi_1$ is the projection to the first component, 
$\phi_{U'}$ is a fibration, and 
$h$ is a crepant finite surjective morphism.

We argue that $g$ is a finite morphism.
Let $S_{X'_U,0}$ and $S_{X'_U,\infty}$ be the 
restrictions of $S'_0$ and $S'_\infty$ to $X'_U$, respectively.
Note that both $S_{X'_U,0}$ and $S_{X'_U,\infty}$ are $\qq$-Cartier.
Furthermore, the divisors
$S_{X'_U,0}$ and $S_{X'_U,\infty}$ pull-back to positive
multiples of $V'\times \{0\}$ and $V'\times \{\infty\}$, respectively. 
Assume that $g$ contracts a curve $C$. 
If $C$ is vertical over $V'$, then $C$ is a fiber of the projection $\pi_1$ and 
\[
0 
= S_{X'_U,0} \cdot g(C) 
= g^*(S_{X'_U,0})\cdot C
= m_0 (V'\times \{0\}) \cdot C 
= m_0 >0,
\]
leading to a contradiction.
If $C$ is horizontal over $V'$, then by the commutativity of the diagram~\eqref{eq:comm-diag-product}, we get that
$h$ contracts a curve. This leads to a contradiction as well. 
We conclude that $g$ is finite.

Note that we have all the conditions to apply Lemma~\ref{lemma:quotient-of-product-is-p1-link}
except possibly one.
Indeed, the couples in the commutative diagram~\eqref{eq:comm-diag-product} may be sub-pairs. Hence, we need to add a suitable effective divisor to make them log pairs.
Note that $(U',B_{U'})$ is a log smooth sub-pair
and the coefficients of $B_{U'}$ are strictly less than one. 
Furthermore, $h$ is a finite morphism. 
We pick $\Gamma\geq 0$ on $U'$, a simple normal crossing divisor, such that
$1-\frac{1}{r_P}\leq {\rm coeff}_P(B_{U'}+\Gamma)<1$
for every prime divisor $P$ of $U'$, where $r_P$ is the ramification index of $h$ at $P$.
Then, we have that both $(U',B'_U+\Gamma)$ and $(V',B_{V'}+h^*\Gamma)$ are a klt pairs.
Furthermore, the pair
$(V',B_{V'}+h^*\Gamma)\times (\pp^1,\Sigma^1)$ is a plt pair, 
so $(X_{U'}, B_{X_{U'}}+\phi_{U'}^*\Gamma)$ is a plt pair. 
Hence, we get a commutative diagram:
\[
\xymatrix{ 
(X'_U, B_{X'_U}+\phi_{U'}^*\Gamma)\ar[d]_-{\phi_{U'}} & (V',B_{V'}+h^*\Gamma)\times (\pp^1,\Sigma^1)\ar[l]_-{g}\ar[d]^-{\pi_1} \\
(U',B_{U'}+\Gamma) & (V',B_{V'}+h^*\Gamma)\ar[l]^-{h}
}
\]
where the horizontal arrows are crepant finite surjective morphisms 
and $\pi_1$ is the projection to the first component. Note that $\phi_{U'}$ is a crepant contraction for the log Calabi--Yau pair
$(X'_U,B_{X'_U}+\phi_{U'}^*\Gamma)$ and $(U',B_{U'}+\Gamma)$ is klt.
By Lemma~\ref{lemma:quotient-of-product-is-p1-link}, we conclude that 
$(X'_{U},B_{X'_{U}}+\phi_{U'}^*\Gamma)\rightarrow U'$ is a standard $\pp^1$-link.
As $U'$ is smooth, all the vertical divisors of $X_{U'}\rightarrow U'$ are $\qq$-Cartier.
This implies that
$(X'_{U},S_{X'_U,0} +S_{X'_U,\infty})\rightarrow U'$ is a standard $\pp^1$-link as well. 
Thus, we conclude that 
$(X',S'_0+S'_\infty)\rightarrow Z'$ is a standard $\pp^1$-link.
This follows, as being a standard $\pp^1$-link is a local property on the base of the fibration.
Note tat both $S'_0$ and $S'_\infty$ are $\qq$-Cartier divisors.\\

\textit{Step 2:} In this step, we modify the standard $\pp^1$-link over $Z'$ such that it has no multiple fibers over codimension one points.\\

Assume that $(X',S_0'+S_\infty')\rightarrow Z'$ has a fiber of multiplicity $m$ over the prime divisor $P\subset Z'$.
Let $mE_P:={\phi'}^*P$.
Then, the variety $X'$ has a toric singularity of orbifold index $m$ at the generic point of $Q:=S_0'\cap {\phi'}^*P$ (see~\cite[Theorem 1]{MS21}).
In particular, there is a blow-up $Y\rightarrow X'$ that extracts a $\qq$-Cartier divisor $E_Q$ with center $Q$ and log discrepancy $\frac{1}{m}$ with respect to the pair $(X',S_0'+S_\infty')$ (see e.g.,~\cite[Theorem 1]{Mor19}).
Let $(Y,S_{Y,0}+S_{Y,\infty}+(1-\frac{1}{m})E_Q)$ be the log pull-back of $(X',S_0'+S_\infty')$ to $Y$.
Note that both $S_{Y,0}$ and $S_{Y,\infty}$ are $\qq$-Cartier divisors.
By the canonical bundle formula, the divisor $E_Q$ appears with multiplicity $1$ in 
${\phi'}^*P$.
Note that the strict transform of $E_P$ in $Y$ is a degenerate divisor over $Z'$.
Hence, we may contract the strict transform 
of $E_P$ over $Z'$ with a birational contraction
$Y\dashrightarrow Y'$.
This birational contraction can be obtained by running a $(K_{Y'}+S'_\infty+(1-\frac{1}{m})E_Q)$-MMP over $Z'$. 
Let $E_{Q,Y'}$ be the strict transform of $E_Q$ in $Y'$.
Let $S_{Y',i}$ be the strict transform of $S'_i$ in $Y'$.
Then, we have a commutative diagram 
\[
\xymatrix{
(X',S_0'+S_\infty')\ar[rd]_-{\phi'} &  & \left(Y',S_{Y',0}+S_{Y',\infty}+\left(1-\frac{1}{m}\right)E_{Q,Y'}\right)\ar@{-->}[ll]_-{\pi}\ar[ld]^-{\phi_{Y'}} \\
& Z' &
}
\]
where $\pi$ is a crepant birational map.
Note that $\phi'$ is a Mori fiber space by Lemma~\ref{lem:P1-is-MFS}. 
Indeed, we know that $S'_0$ and $S'_\infty$ are $\qq$-Cartier.
Henceforth, the morphism $\phi_{Y'}$ is a Mori fiber space. 
Note that $S_{Y',0}$ and $S_{Y',\infty}$ are $\qq$-Cartier.
By Lemma~\ref{lem:MFS-two-disjoint-sect-is-p1}, we conclude that $\phi_{Y'}$ is a standard $\pp^1$-link whose fiber over $P$ is not multiple. Since $\phi'$ has only finitely many multiple fibers over codimension one points, we can repeat the above procedure to obtain a standard $\pp^1$-link $(X'',S_0''+S_\infty'')\rightarrow Z'$ with no multiple fibers over codimension one points.
The pair $(X',S_0'+S'_\infty)$ is crepant equivalent to a log Calabi--Yau pair
$(X'',S_0''+S_\infty''+S)$ over $Z'$, where $S$ is an effective divisor with standard coefficients that is vertical over $Z'$.\\

\textit{Step 3:} In this step, we show that $X''$ is the projectivization of a rank $2$ split vector bundle over $Z'$.\\

First, we argue that $X''$ is smooth.
Let $z\in Z'$ be a closed point. 
Let $H_1,\dots,H_{n-1}$ be Cartier divisors containing $z$ such that $(Z',H_1+\dots+H_{n_1};z)$ is simple normal crossing. 
Let $H_{X,i}$ be the pull-back of $H_i$ to $X$.
Then, the pair $(X'',S_0''+S_\infty''+\sum_{i=1}^{n-1} H_{X,i})$ is log canonical and log Calabi--Yau over $Z'$.
By~\cite[Theorem 1]{MS21}, we conclude that $\phi''\colon X''\rightarrow Z'$ is formally toric over a neighborhood of $z\in Z'$.
Further, all the reduced fibers of the toric morphisms are isomorphic to $\pp^1$.
Thus, the morphism $X''\rightarrow Z'$ is formally isomorphic to 
$X(\Sigma)\rightarrow Z(\sigma)$
with $\sigma=\langle e_1,\dots,e_{n-1}\rangle$.
The fan $\Sigma$ has maximal cones 
$\langle (e_1,p_1),\dots,(e_{n-1},p_n),(0,1)\rangle$
and $\langle (e_1,p_1),\dots,(e_{n-1},p_{n-1}),(0,-1)\rangle$
where the $p_i$ are integers.
The fact that each $p_i$ is an integer follows from the fact that $\phi''$ has no multiple fibers over codimension one points. 
We conclude that $X(\Sigma)$ is smooth, so $X''$ is smooth on a neighborhood of the fiber over $z\in Z'$. Since $z\in Z''$ is any closed point, we conclude that $X''$ is smooth. Therefore, $S_0''$ is a Cartier divisor that restricts to a single point on general fibers. By~\cite[Lemma 2.12]{Fuj87}, we conclude that $X''$ is a projectivized rank $2$ vector bundle over $Z'$.
Since $\phi''$ admits a section $S''_\infty$, which is disjoint from $S''_0$, we conclude that such vector bundle splits.\\

\textit{Step 4:} In this step, we show that the pair $(X,S_0+S_\infty)$ admits a $\mathbb{G}_m$-action.\\

Since $X''\rightarrow Z'$ is the projectivization of a rank $2$ split vector bundle, we have $\mathbb{G}_m\leqslant {\rm Aut}_{Z'}(X'')$.
Since $S''_0$ and $S''_\infty$ are sections of the vector bundle, we get 
$\mathbb{G}_m\leqslant {\rm Aut}_{Z'}(X'',S''_0+S''_\infty)$.
In particular, we have
$\mathbb{G}_m\leqslant {\rm Aut}_{Z}(X'',S''_0+S''_\infty)$.
Let $(X'',B'')$ be the log pull-back of $(X,B)$ to $X''$.
Then, we get 
$\mathbb{G}_m\leqslant {\rm Aut}_Z(X'',B'')$.
Indeed, $B''$ consists of $S_0''+S_\infty''$ and divisors that are vertical over $Z'$.
The birational map
$X\dashrightarrow X''$
only extract divisors with log discrepancy at most one with respect to $(X'',B'')$
whose centers are contained in either $S''_0$ or $S''_\infty$.
Note that both $S''_0$ and $S''_\infty$ are contained in the fixed point locus of $\mathbb{G}_m$. 
Thus, we can extract the exceptional divisors of $X\dashrightarrow X''$ torus equivariantly from $X''$ (see Lemma~\ref{lem:canonical-places}).
More precisely, there is a $\mathbb{G}_m$-equivariant projective birational crepant morphism $(Y,B_Y)\rightarrow (X'',B'')$
such that $Y\dashrightarrow X$ is a contraction over $Z$. By Lemma~\ref{lem:descending-torus-actions}, we conclude that $\mathbb{G}_m \leqslant {\rm Aut}_Z(X,B)$ and so 
$\mathbb{G}_m \leqslant {\rm Aut}_Z(X,S_0+S_\infty)$.\\

\textit{Step 5:} In this step, we finish the proof of the $\qq$-factorial case by using the torus action on $(X,S_0+S_\infty)$.\\

Note that all the reduced fibers of $\phi\colon (X,S_0+S_\infty)\rightarrow Z$ are isomorphic to $(\pp^1,\{0\}+\{\infty\})$.
Furthermore, we have 
$\mathbb{G}_m\leqslant {\rm Aut}_Z(X,S_0+S_\infty)$.
We conclude that the $\mathbb{G}_m$-orbits are contained in fibers of $\phi$.
In particular, one-dimensional orbit closures of the $\mathbb{G}_m$-action on $X$ are disjoint.
Let $Z_0$ be the normalized Chow quotient of $X$ by the $\mathbb{G}_m$-action.
Let $\mathcal{S}$ be a divisorial fan on $Z_0$
such that there is a $\mathbb{G}_m$-equivariant isomorphism $X(\mathcal{S})\simeq X$ (see~\cite[Theorem 5.6]{AHS08}).
Since orbit closures of $X$ are disjoint, every polyhedral divisor $\mathcal{D}\in \mathcal{S}$ has affine support. Let $\zz$ be the weighted monoid of the $\mathbb{G}_m$-action on $X$. Let $\mathcal{D}$ be a p-divisor on $\mathcal{S}$ with weighted cone $\qq_{\geq 0}$. Then, the variety $\widetilde{X}(\mathcal{D})\simeq X(\mathcal{D})$ is equivariantly isomorphic to ${\rm Spec}_{U_0}(\bigoplus_{m\geq 0}\mathcal{O}_X(mD_0))$ for some $\qq$-Cartier $\qq$-divisor $D_0$ on $U_0$ an affine open of $Z_0$ (see, e.g.,~\cite[Theorem 1]{Wat81}). By~\cite[Corollary 8.12]{AH06}, these $\mathbb{G}_m$-invariant open affine varieties equivariantly glue to ${\rm Spec}_{Z_0}(\bigoplus_{m\geq 0}\mathcal{O}_X(mD))$ for some $\qq$-Cartier $\qq$-divisor $D$ on $Z_0$. 
Analogously, the p-divisors of $\mathcal{S}$ with weighted cone $\qq_{\leq 0}$ equivariantly glue to 
${\rm Spec}_{Z_0}(\bigoplus_{m\leq 0} \mathcal{O}_X(mD))$. 
We conclude that $X$ is isomorphic to the equivariant gluing of ${\rm Spec}_{Z_0}(\bigoplus_{m\geq 0} \mathcal{O}_X(mD))$
and ${\rm Spec}_{Z_0}(\bigoplus_{m\leq 0} \mathcal{O}_X(mD))$ along
${\rm Spec}_{Z_0}(\bigoplus_{m\in \zz} \mathcal{O}_X(mD))$
Thus, $X$ is isomorphic to $\mathbb{P}(\mathcal{O}_{Z_0}\oplus \mathcal{O}_{Z_0}(D))$. 
Note that the fibers of $X\rightarrow Z_0$ are precisely the one-dimensional orbit closures for the torus action. 
This implies that $Z_0\simeq Z$. This finishes the proof in the $\qq$-factorial case.\\

\textit{Step 6:} In this step, we finish the proof in the general case.\\ 

Let $(X,B)\rightarrow Z$ be a standard $\pp^1$-link. 
Since $(X,B)$ is plt, it admits a small $\qq$-factorialization $Y\rightarrow X$.
Let $(Y,B_Y)$ be the log pull-back of $(X,B)$ to $Y$.
We run a $K_{Y}$-MMP over $Z$.
Let $Y\dashrightarrow X'$ be the steps of this MMP
and $X'\rightarrow Z'$ be the induced Mori fiber space.
Note that every prime divisor on $Y$ that is vertical over $Z$ is the closure of the pull-back of a prime divisor on $Z$. 
Then, the relative MMP $Y\dashrightarrow X'$ over $Z$ contracts no divisors.
Let $B'$ be the push-forward of $B_Y$ to $X'$
so $(X',B')$ is a plt pair
and $\lfloor B'\rfloor$ has two disjoint sections over $Z'$.
By Lemma~\ref{lem:MFS-two-disjoint-sect-is-p1}, we conclude that $(X',B')\rightarrow Z'$ is a standard $\pp^1$-link with $X'$ being $\qq$-factorial.
By the previous step, we have 
$\mathbb{G}_m\leqslant {\rm Aut}_{Z'}(X',B')$.
In particular, we have 
$\mathbb{G}_m\leqslant {\rm Aut}_{Z}(X',B')$.
By Lemma~\ref{lem:descending-torus-actions}, we get
$\mathbb{G}_m\leqslant {\rm Aut}_Z(X,B)$.
Then, the same argument as in Step 5 finishes the proof.
\end{proof} 

Now, we turn to prove a structural theorem for standard $\pp^1$-links for low-dimensional varieties.

\begin{proof}[Proof of Theorem~\ref{introthm:bir-model-p1-link}]
Let $X$ be a variety of dimension at most three.
Let $(X,B)\rightarrow Z$ be a standard $\pp^1$-link.
Let $(Z,B_Z)$ be the log pair induced on $Z$ by the canonical bundle formula.
Note that $Z$ is either a surface or a curve.
By Theorem~\ref{introthm:p1-link-char}, we know that $X$ is the projectivization of a split $\qq$-vector bundle of rank $2$ on $Z$. 
Thus $X$ is isomorphic to 
$\mathbb{P}(\mathcal{O}_Z\oplus \mathcal{O}_Z(D))$ for a $\qq$-Cartier $\qq$-divisor $D$ on $Z$.
Let $\pi\colon Z'\rightarrow Z$ be a minimal resolution of $Z$. Let $(Z',B_{Z'})$ be the log pull-back of $(Z,B_Z)$ to $Z'$.
Let $D'$ be the pull-back of $D$ to $Z'$
and $X'\simeq \pp(\mathcal{O}_{Z'}\oplus \mathcal{O}_{Z'}(D'))$.
Let $\phi'\colon X'\rightarrow Z'$ be the induced fibration. Let $B':={\phi'}^*B_Z+S'_0+S'_\infty$,
where $S'_0$ and $S'_\infty$ are the strict transforms
of $S_0$ and $S_\infty$ in $X'$, respectively.
Then, we have a commutative diagram:
\[
\xymatrix{
(X,B)\ar[d]_-{\phi} & (X',B')
\ar[l]_-{\pi'}
\ar[d]^-{\phi'}\\ 
(Z,B_Z) & (Z',B_{Z'}) \ar[l]^-{\pi} 
}
\]
where the vertical arrows are crepant contractions
and the horizontal arrows are crepant birational morphisms.
Proceeding as in Step 2 of the proof of Theorem~\ref{introthm:p1-link-char}, we get a commutative diagram:
\[
\xymatrix{
(X,B)\ar[d]_-{\phi} & (X',B') 
\ar[l]_-{\pi'}
\ar[d]^-{\phi'} & (X'',B'')\ar[dl]^-{\phi''}\ar[l]_-{\pi''} \\ 
(Z,B_Z) & (Z',B_{Z'}) \ar[l]^-{\pi} & 
}
\]
where 
and $X''\rightarrow Z'$ is the projectivization of a rank $2$ split vector bundle.
In particular, we have that $\lfloor B''\rfloor=S''_0+S''_\infty$ are two disjoint smooth sections of $X''\rightarrow Z'$.
Let $k:=(S''_0)^2$. 
If $k=0$, then $X''$ is a trivial projective bundle and we are done.
Assume that $k>0$. 
We blow-up $X''$ at $k$ smooth points along $S''_0$
and blow down the strict transform of the corresponding fiber over $Z'$.
By doing so, we obtain a crepant birational map
$\pi'''\colon (Z',B_{Z'})\times (\pp^1,\Sigma^1)\dashrightarrow (X'',B'')$ over 
$Z'$. The product $(Z',B_{Z'})\times (\pp^1,\Sigma^1)$ carries the structures of a trivial $\pp^1$-bundle over $Z'$ and so we obtain a commutative diagram:
\[
\xymatrix{
(X,B)\ar[d]_-{\phi} & (X',B') 
\ar[l]_-{\pi'}
\ar[d]^-{\phi'} & (X'',B'')\ar[dl]_-{\phi''}\ar[l]_-{\pi''} & (Z',B_{Z'})\times (\pp^1,\Sigma^1) \ar[l]_-{\pi'''} \ar[lld]^-{\pi_1} \\ 
(Z,B_Z) & (Z',B_{Z'}) \ar[l]^-{\pi}. & &
}
\]
Note that the composition $\psi^{-1}:= \pi'''\circ \pi''\circ \pi'\colon (Z',B_{Z'})\times (\pp^1,\Sigma^1)\dashrightarrow (X,B)$ is a crepant birational map. This finishes the proof.
\end{proof}

\section{Birational complexity of log Calabi--Yau 3-folds}

In this section, we prove that a log Calabi--Yau $3$-fold of index one and coregularity zero
has birational complexity different than $1$.

\begin{proof}[Proof of Theorem~\ref{introthm:values-cbir-3-folds}]
Let $(X,B)$ be a log Calabi--Yau $3$-fold of index one and coregularity zero.
Since $(X,B)$ has index one its birational complexity is an integer.
By~\cite[Theorem 1.4]{MM24}, we know that $c_{\rm bir}(X,B)\in \{0,1,2,3\}$.
Thus, it suffices to show that the birational complexity of $(X,B)$ is not one. 
Assume, for the sake of contradiction, that
$c_{\rm bir}(X,B)=1$.
By~\cite[Theorem 1.1]{Mor24a}, there exists a crepant birational model $(X',B')$ of $(X,B)$
and a sequence of Mori fiber spaces 
\[
\xymatrix{ 
X' \ar[r]^-{\pi_1} & Z_1 \ar[r]^-{\pi_2} & Z_2 \ar[r]^-{\pi_3} & {\rm Spec}(\kk).
}
\]
where at least two of the three fibrations
$\pi_1,\pi_2$ and $\pi_3$ are strict conic fibrations.
Let $(Z_1,B_{Z_1})$
and $(Z_2,B_{Z_2})$ be the log Calabi--Yau 
pairs induced by the canonical bundle formula.
We may assume that $c_{\rm bir}(X,B)=c(X',B')$.

First, assume that $\pi_1$ is a strict conic fibration.
By~\cite[Lemma 2.36]{Mor24a}, we may assume that $(Z_1,B_{Z_1})$ is $\qq$-factorial and dlt.
Hence, by Lemma~\ref{lem:strict-conic-bundle-is-p1-link}, we conclude that $(X',\Gamma') \rightarrow Z_1$ is a standard $\pp^1$-link for a suitable effective divisor $0\leq \Gamma'\leq B'$. By Theorem~\ref{introthm:p1-link-char}, we conclude that $X'\rightarrow Z_1$ is a projectivized split $\qq$-vector bundle.
In particular, the log Calabi--Yau pair $(Z_1,B_{Z_1})$ is a log Calabi--Yau pair of coregularity zero and index one.
Hence, we have $(Z_1,B_{Z_1})\simeq_{\rm cbir} (\pp^2,\Sigma^2)$.
Note that the birational map $\phi\colon \pp^2 \dashrightarrow Z_1$ only extracts log canonical places
and canonical places.
Each such canonical place (resp. log canonical place)
induce a canonical place (resp. log canonical place)
on $(X',B')$ by adjunction.
Thus, there is a crepant birational map
$\phi_X \colon (X'',B'')\dashrightarrow (X',B')$,
with $B''\geq 0$, 
making the following diagram commutative:
\[
\xymatrix{
(X',B')\ar[d]_-{\pi_1} & (X'',B'')\ar[d]^-{\pi_1'}\ar@{-->}[l]_-{\phi_X} \\
(Z_1,B_{Z_1}) & (\pp^2,\Sigma^2)\ar@{-->}[l]^-{\phi}
}
\]
where $\pi'_1\colon (X'',B'')\rightarrow \pp^2$ is a strict conic fibration.
In particular, we get $\rho(X'')=2$
and $|B''|=5$.
We conclude that $(X'',B'')$ is a toric pair
and so $c_{\rm bir}(X,B)=0$,
leading to a contradiction.

From the previous paragraph, we know that $\pi_1$ is not a strict conic fibration.
Hence, $B'$ contains a unique prime divisor that dominates $Z_1$.
In this case, we have $c(X',B')=2$ 
and $\rho(X')=\rho(Z_1)+1$.
We conclude that $|B'|= \rho(Z_1)+3$.
Thus $|B'|$ has at least $\rho(Z_1)+2$ prime components that are vertical over $Z_1$.
Since $X'\rightarrow Z_1$ is a Mori fiber space, the divisor $\lfloor B_{Z_1}\rfloor$ has at least $\rho(Z_1)+2$ prime components.
We conclude that $\lfloor B_{Z_1}\rfloor$ has at least $\rho(Z_1)+2$ prime components.
Therefore, $(Z_1,B_{Z_1})$ is a toric log Calabi--Yau pair.
In particular, there is a crepant birational map
$(\pp^2,\Sigma^2)\dashrightarrow (Z_1,B_{Z_1})$ that only extract log canonical places.
By~\cite[Lemma 2.1]{MM24}, there is a crepant birational model $(X'',B'')$ of $(X',B')$ that admits a crepant fibration to $(\pp^2,\Sigma^2)$.
Let $\pi'_1\colon (X'',B'')\rightarrow  (\pp^2,\Sigma^2)$ 
be the corresponding fibration.
We have that ${\pi'_1}^*(\Sigma^2)$ is contained in the support of $B''$.
In particular, $B''$ has four prime components;
three prime components that are vertical over $\pp^2$
and dominate the prime components of $\Sigma^2$,
and a horizontal prime component that maps $2$-to-$1$ to $\pp^2$.
Let $S$ be the horizontal component of $B''$
and $S_0,S_1$, and $S_2$ be the vertical components.
Note that the pair $(X'',S)$ is plt.
Indeed, it has no log canonical centers, different from $S$,
that are horizontal over $\pp^2$
and it has not log canonical centers that are vertical over $\pp^2$. In particular, the divisor $S$ is normal (see~\cite[Proposition 5.51]{KM98}).
Let $(S,B_S)$ be the log Calabi--Yau pair obtained
by adjunction of $(X'',B'')$ to $S$.
Then the morphism $(S,B_S)\rightarrow (\pp^2,\Sigma^2)$ is a finite crepant morphism.
We conclude that $(S,B_S)$ is a toric log Calabi--Yau pair.

Since $(S,B_S)$ is a toric log Calabi--Yau surface,
each intersection $S\cap S_i$ is irreducible for $i\in \{0,1,2\}$. 
We argue that $S\cap S_i\rightarrow H_i$ is a $2$-to-$1$ cover for each $i\in \{0,1,2\}$.
Here, $H_i$ is the $i$-th coordinate hyperplane of $\pp^2$. It suffices to show the statement for $i=0$.
Let $(S_0,B_{S_0})$ be the slc Calabi--Yau pair induced by adjunction of $(X'',B'')$ to $S_0$ (see~\cite[Example 2.6]{FG12}).
Consider the fibration $f_0\colon S_0\rightarrow H_0$.
Since $S_0$ is prime, a general fiber $F$ of $f_0$ is an irreducible slc curve.
Let $(F,B_F)$ be the restriction of $(S_0,B_{S_0})$ to $F$.
Note that $(F,B_F)$ is an slc log Calabi--Yau pair.
Since $S\cap S_0$ intersects $F$, we conclude that $F$ is smooth. Otherwise, its normalization is not a curve of log Calabi--Yau type.
There are two possibilities; either $S\cap S_0\rightarrow H_0$ is $2$-to-$1$ or 
there is a $1$-dimensional log canonical center $Q$ of $(X'',B'')$,
that dominates $H_0$, contained in $S_0$ but not contained in $S,S_1$, or $S_2$.
In the latter case, the pair $(X'',S_0;Q)$ is not plt.
By the classification of $2$-dimensional log canonical singularities, we conclude that
$2(K_{X''}+B'')\sim 0$ and $K_{X''}+B''$ is not linearly trivial around $Q$ (see~\cite[\S 3, Figure 3]{Kol92}).
This contradicts the fact that $(X'',B'')$ has index one. We conclude that $S\cap S_i\rightarrow H_0$ is $2$-to-$1$ for each $i$.

Let $S_0^\vee \rightarrow S_0$ be its normalization
and $(S_0^\vee,B_0^\vee)$ be the log pull-back of $(S_0,B_{S_0})$ to $S_0^\vee$.
Note that $(S_0^\vee,B_0^\vee)$ is a log Calabi--Yau surface of index one and coregularity zero.
Let $S_0^\vee \rightarrow \pp^1$ be the contraction associated
to the Stein factorization of $S_0^\vee\rightarrow H_0$.
Then, the log Calabi--Yau pair induced on $\pp^1$ by the canonical bundle formula is $(\pp^1,\Sigma^1)$.
Up to a crepant birational transformation over $\pp^1$, 
we may assume that $S_0^\vee$ is $\qq$-factorial and $B_0^\vee$ has three prime components;
two vertical components $P_1$ and $P_2$ that map to $\{0\}$ and $\{\infty\}$ respectively, 
and one horizontal component $P$ that maps $2$-to-$1$ to $\pp^1$.
In particular, $P$ and $P_1$ intersect at a single point.
This implies that $P_1$ contains a closed point $q$ that is a log canonical center of $(S_0^\vee,B_0^\vee)$ which is not contained in either $P_2$ or $P$. 
Hence, the pair $(S_0^\vee,B_0^\vee)$ has a $D$-type singularity at $q$ (see~\cite[\S 3, Figure 3]{Kol92}).
This implies that $K_{S_0}^\vee + B_0^\vee$ does not have index one at $q$. 
Then, we get a contradiction.
We conclude that the birational complexity of $(X,B)$ is not one.
\end{proof} 

\section{Crepant fibrations to projective spaces}

In this section, we construct crepant fibration
to projective spaces for log Calabi--Yau $3$-folds
of index one and coregularity zero.

\begin{proof}[Proof of Theorem~\ref{introthm:crepant-model-3-fold}]
Let $(X,B)$ be a log Calabi--Yau $3$-fold 
of index one and coregularity zero.
By Theorem~\ref{introthm:values-cbir-3-folds}, we know that 
$c_{\rm bir}(X,B)\in \{0,2,3\}$.
If $c_{\rm bir}(X,B)=3$, then the statement is vacuous as we can simply consider the structure morphism $X\rightarrow {\rm Spec}(\kk)$.
If $c_{\rm bir}(X,B)=0$, then we know that $(X,B)\simeq_{\rm cbir}(\pp^3,\Sigma^3)$ by~\cite[Theorem 1.6]{MM24}.
Thus, it suffices to consider the case 
$c_{\rm bir}(X,B)=2$ and show that $(X,B)$ has a crepant
birational model that admits a crepant contraction to $(\pp^1,\Sigma^1)$.

Let $(X,B)$ be a log Calabi--Yau $3$-fold 
of index one and coregularity zero.
Assume that $c_{\rm bir}(X,B)=2$.
By~\cite[Theorem 1.1]{Mor24a}, there exists a crepant birational model $(X',B')$ of $(X,B)$ and a sequence of Mori fiber spaces
\[
\xymatrix{ 
X' \ar[r]^-{\pi_1} & Z_1 \ar[r]^-{\pi_2}  & \dots \ar[r]^-{\pi_k} & {\rm Spec}(\kk),  
}
\]
where $2\leq k\leq 3$ and at least one of the Mori fiber spaces
is a strict conic fibration.
If $\pi_k$ is a strict conic fibration, then the statement follows.
If $\pi_1$ is a strict conic fibration, then proceeding as in the proof of Theorem~\ref{introthm:values-cbir-3-folds}, we get $c_{\rm bir}(X,B)=0$, leading to a contradiction.
We conclude that $k=3$ and $\pi_2$ is a strict conic fibration.

From now on, we assume that $k=3$ and $\pi_2$ is a strict conic fibration.
Let $(Z_1,B_{Z_1})$ be the log Calabi--Yau pair induced by the canonical bundle formula.
Let $\pi_2\colon (Z_1,B_{Z_1})\rightarrow \pp^1$ be the strict conic fibration.
By~\cite[Corollary 3]{FMM22}, we know that $(Z_1,B_{Z_1})$ has index two and coregularity zero.
Furthermore, we may assume that $B'$ has a unique prime component $S$ that dominates $Z_1$ and $\pi_1^*\lfloor B_{Z_1} \rfloor$ is contained in the support of $B'$.
Note that $S$ is a normal divisor as
$(X',S)$ is plt.
Let $(S,B_S)$ be the log Calabi--Yau pair obtained by performing adjunction of $(X',B')$ to $S$.
Then, we have a crepant finite surjective morphism
$(S,B_S)\rightarrow (Z_1,B_{Z_1})$ of degree $2$.
Let $Z'\rightarrow Z_1$ be a minimal resolution.
Since $(Z_1,B_{Z_1})\rightarrow \pp^1$ is a strict conic fibration,
all the singularities of $Z_1$ are contained in $\lfloor B_{Z_1}\rfloor$.
In particular $Z'\rightarrow Z_1$ only extracts divisors with log discrepancy $\leq \frac{1}{2}$ with respect to $(Z_1,B_{Z_1})$.
Let $(Z',B_{Z'})$ be the log pull-back of $(Z_1,B_{Z_1})$ to $Z'$.
By Lemma~\ref{lem:2-to-1}, we have a crepant birational model
$(X'',B'')$ of $(X',B')$, with $B''$ effective, 
that admits a crepant fibration to $(Z_2,B_{Z_2})$, where $(Z_2,B_{Z_2})\rightarrow (Z_1,B_{Z_1})$ is a dlt modification.
Further, the morphism $X''\rightarrow Z_2$ is a Mori fiber space.
Note that $Z_2$ is a smooth surface.
Every fiber of $Z_2\rightarrow \pp^1$ that contains a log canonical center 
is contained in $\lfloor B_{Z_2}\rfloor$.
This follows from the connectedness of log canonical centers (see~\cite[Theorem 1.1]{FS20}).
Thus, when we run a $K_{Z_2}$-MMP over $\pp^1$, every curve contracted by the MMP
is either disjoint from the log canonical centers or
intersects positively a curve with coefficient one in $\lfloor B_{Z_2}\rfloor$.
Thus, by Lemma~\ref{lem:going-down}
and Lemma~\ref{lem:going-down-2}, we can find a crepant birational model $(X^{(3)},B^{(3)})$ of $(X'',B'')$, with $B^{(3)}$ effective, that admits a crepant contraction to $(Z_3,B_{Z_3})$.
The morphism $X^{(3)}\rightarrow Z_3$ is a Mori fiber space and $Z_3$ is a Hirzebruch surface.

First, we assume that every vertical component of $B_{Z_3}$ has coefficient $\frac{1}{2}$.
In this case, the pair $(Z_3,B_{Z_3})$ is plt.
We argue that there exists a crepant birational map
$(\pp^1\times \pp^1,\Gamma)\dashrightarrow (Z_3,B_{Z_3})$ that only extract divisors with log discrepancy $\leq \frac{1}{2}$ with respect to $(Z_3,B_{Z_3})$ and $\lfloor \Gamma\rfloor$ contains two disjoint sections for the projection to the first component.
Indeed, the log Calabi--Yau pair $(Z_3,B_{Z_3})$ has a prime component $P$ that is vertical over $\pp^1$ and has coefficient $\frac{1}{2}$ in $B_{Z_3}$.
The crepant birational map 
$(\pp^1\times\pp^1,\Gamma)\dashrightarrow (Z_2,B_{Z_2})$ can be obtained by inductively blowing up the intersection of $P$ 
and one of the sections of the Hizebruch surface, and then blowing down the strict transform of the corresponding fiber.
By Lemma~\ref{lem:2-to-1} and Lemma~\ref{lem:going-down-2}, there exists a crepant model $(X^{(4)},B^{(4)})$ of $(X^{(3)},B^{(3)})$, with $B^{(4)}$ effective, that admits a crepant fibration to $(\pp^1\times\pp^1,\Gamma)$.
Note that $(\pp^1\times \pp^1,\Gamma)$ admits a crepant fibration to $(\pp^1,\Sigma^1)$
by projecting to the second component.
Thus, the log Calabi--Yau pair
$(X^{(4)},B^{(4)})$, which is crepant birational equivalent to
$(X,B)$, admits a crepant fibration to $(\pp^1,\Sigma^1)$.

Finally, assume that there is a vertical component of $B_{Z_3}$ with coefficient $1$.
In this case, there exists a crepant birational map $(\pp^1\times \pp^1,\Gamma)\dashrightarrow (Z_3,B_{Z_3})$ that only extract log canonical places
and $\lfloor \Gamma\rfloor$ contains two disjoint sections for the projection to the first component.
Then, by~\cite[Lemma 2.10]{MM24} and 
Lemma~\ref{lem:going-down}, there exists a crepant model $(X^{(4)},B^{(4)})$ of $(X^{(3)},B^{(3)})$, with $B^{(4)}$ effective, that admits a crepant fibration to $(\pp^1\times\pp^1,\Gamma)$.
The pair $(\pp^1\times\pp^1,\Gamma)$ admits a crepant fibration to $(\pp^1,\Sigma^1)$ by projecting to the second component.
The log Calabi--Yau pair $(X^{(4)},B^{(4)})$ is crepant birational equivalent to $(X,B)$
and admits a crepant fibration to $(\pp^1,\Sigma^1)$.
\end{proof} 

\section{Examples and questions}

In this section, we collect some examples and propose some questions.
First, we give examples of log Calabi--Yau pairs $(X,B)$
of dimension $3$, index $1$, and coregularity $0$
with different birational complexities.

\begin{example}\label{ex:cbir=0}
{\em 
Let $(T,B_T)$ be any toric log Calabi--Yau pair, i.e., 
$B_T$ is the reduced complement of the algebraic torus on $T$.
Then, we have $c(T,B_T)=\dim T +\rho(T)-|B_T|=0$.
Hence, $(T,B_T)$ is a log Calabi--Yau pair of index $1$, coregularity $0$, and birational complexity $0$.
}
\end{example}

\begin{example}\label{ex:cbir=2}
{\em 
In this example, we construct a log Calabi--Yau pair $(X,B)$ of dimension $3$, index $1$, coregularity $0$, 
and birational complexity $2$.

Let $X$ be a smooth cubic hypersurface in $\mathbb{P}^4$.
Then, $X$ is not a rational variety~\cite[Theorem 0.12]{CG72}.
Let $H$ be a general hyperplane in $\mathbb{P}^3$.
The pair $(\pp^4,X+H)$ is a dlt Fano pair
of coregularity $2$.
Let $Z$ be a minimal log canonical center of $(\pp^4,X+H)$.
Then, $Z$ is a cubic surface in $\pp^3$.
In particular, there exists a $1$-complement of coregularity zero $(Z,C)$.
By Kawamata-Viehweg vanishing theorem, the $1$-complement $C$ lifts to a $1$-complement 
$C_X$ of the log Fano pair $(X,H|_X)$.
Thus, we have that $(X,H|_X+C_X)$ is a log Calabi--Yau $3$-fold of index one and coregularity zero.
Note that 
\[
c(X,H|_C+C_X)= \dim X + \rho(X) - |H|_C+C_X| 
\leq 2.
\]
We argue that $c_{\rm bir}(X,H|_C+C_X)=2$.
Assume otherwise that $(X,H|_C+C_X)$ admits a log
Calabi--Yau crepant birational model $(Y,B_Y)$
for which $c(Y,B_Y)<2$.
Then, by Theorem~\ref{introthm:values-cbir-3-folds}, we would have $c(Y,B_Y)=0$.
Indeed, the complexity of this log Calabi--Yau pair
must be an integer and cannot be equal to one (see Theorem~\ref{introthm:values-cbir-3-folds}).
Thus, the log Calabi--Yau pair $(X,H|_C+C_X)$ would have a toric model, and hence $X$ would be rational.
Leading to a contradiction.
We conclude that $c_{\rm bir}(X,H|_C+C_X)=2$.
}
\end{example}

\begin{example}\label{ex:cbir=3}
{\em 
In this example, we construct a log Calabi--Yau pair $(X,B)$ of dimension $3$, index $1$, coregularity $0$, and birational complexity $3$.

Let $X$ be the hypersurface defined by 
\[
\{
[x_0:x_1:x_2:x_3:x_4]\mid
x_3(x_0^3+x_1^3)+x_2^4+x_0x_1x_2x_3+x_4(x_3^3+x_4^3)=0
\} \subset \mathbb{P}^4_{x_0,...,x_4}.
\]
By~\cite[Example 3.2]{Kal20}, we know that $X$ is a terminal birationally superrigid Fano $3$-fold.
Let $B:=X\cap \{x_4=0\}$.
Then, the log Calabi--Yau pair $(X,B)$
has index one and coregularity zero.
As $X$ is birationally superrigid,~\cite[Corollary 1.2]{Mor24a} implies that $c_{\rm bir}(X,B)=3$.
}
\end{example}

We conclude by posing two natural questions about the values of the birational complexity
of higher-dimensional log Calabi--Yau pairs.

\begin{question}
Let $(X,B)$ be a log Calabi--Yau pair of dimension $n\geq 4$, index one, and coregularity zero.
What are the possible values for the birational complexity $c_{\rm bir}(X,B)$ of $(X,B)$? 
Are there such pairs with birational complexity equal to one?
\end{question} 

We know that every Fano variety $X$ of coregularity zero admits either a $1$-complement or a $2$-complement of coregularity zero (see, e.g.,~\cite[Theorem 4]{FFMP22}).
Thus, we can extend the previous question to this setting.

\begin{question}
Let $(X,B)$ be a log Calabi--Yau pair of dimension $n\geq 2$, index two, and coregularity zero. 
What are the possible values for the birational complexity $c_{\rm bir}(X,B)$ of $(X,B)$?
\end{question} 

In the previous question, it is unclear which fractional numbers occur and how this reflects on the geometry of the pair. For instance, in dimension one, the only possible birational complexity of pairs of coregularity zero is zero.

\bibliographystyle{habbvr}
\bibliography{references}

\end{document}